\documentclass[smallextended,referee,envcountsect,]{svjour3}
\usepackage[T5]{fontenc}
\usepackage[numbers,square]{natbib} % Ensures citations appear in square brackets
\usepackage{enumitem}
\smartqed
\usepackage{textgreek}
\usepackage{enumerate} 
\usepackage[T5]{fontenc}
\usepackage{amsfonts}
\usepackage{amssymb}
\usepackage[mathscr]{eucal}
\usepackage{amsfonts}
\usepackage{dsfont}
\usepackage[normalem]{ulem} % either use this (simple) or
\usepackage{soul} % use this (many fancier options)
\usepackage{nameref}    % Load the nameref package
\usepackage[bottom,stable]{footmisc}  % Ensures footnotes appear at the bottom

\usepackage{tikz}
\usetikzlibrary{decorations.pathreplacing,decorations.markings} 
\usetikzlibrary{shapes} 
\newcommand{\q}[1]{``#1''} %to create a quoting command 
\newcommand{\olsi}[1]{\,\overline{\!{#1}}} % overline short italic
\renewenvironment{case}[1]{
	\begin{trivlist}
		\item[\hskip \labelsep \textbf{Case #1:}] % Customize the label here
	}{
	\end{trivlist}
}
 
\usepackage{amsxtra,latexsym,amssymb, amscd}
\usepackage{pgfplots}
\usepackage{url}
\usepackage{color}

\usepackage{fancyhdr}
\usepackage{tikz}
\usepackage{graphicx}
\usepackage{subcaption}
\usepackage{caption}
\usepackage{epstopdf}
\usepackage{soul}
\usepackage{IEEEtrantools}
\usepackage[utf8]{inputenc} 
\usepackage{appendix} 
\usepackage{comment}
\usepackage[colorlinks,citecolor=blue,filecolor=black,linkcolor=blue,urlcolor=black]{hyperref}

\makeatother 
\numberwithin{equation}{section} % to give the equations numbers the form m.n (section-number.equation-number), use the \numberwithin command in the preamble of the document

\newtheorem{assumption}{Assumption} 

\newtheorem{lemmaalt}{Lemma}[theorem]
\newenvironment{lemmap}[1]{
	\renewcommand{\thelemmaalt}{#1}
	\lemmaalt
}{\endlemmaalt} 

\newtheorem{conditionalt}{Condition}[assumption] 
\newenvironment{conditionp}[1]{
	\renewcommand\theconditionalt{#1}
	\conditionalt
}{\endconditionalt} % New environment  New environment

\newtheorem{theoremalt}{Theorem}[theorem]
\newenvironment{theoremp}[1]{
	\renewcommand\thetheoremalt{#1}
	\theoremalt
}{\endtheoremalt}  % New environment  New environment

\newcommand{\rr}{\mathbb{R}}

\newcommand{\red}[1]{\textcolor{red}{#1}}

\usepackage{graphicx}
\journalname{JOTA}

\begin{document}

%\title{Instructions for Authors}
	\title{Hartman--Stampacchia theorems, Gale--Nikaid\^o--Debreu lemma, and Brouwer and Kakutani fixed-point theorems%\tnoteref{t1} 
	}
	%\tnotetext[t1]{The fourth author thanks Jean-Marc Bonnisseau for the comments in the very first version of the paper.}
	
%\subtitle{Using  the  LaTex Template}

\author{Pascal Gourdel, Cuong Le Van, Ngoc-Sang Pham, Cuong Tran Viet}

\institute{Pascal Gourdel\at
	Universit\'e de Paris 1 Panth\'eon Sorbonne \\ 
	Centre d'\'Economie de la Sorbonne (CES)\\
	pascal.gourdel@univ-paris1.fr
	\and
	Cuong Le Van \at
	CNRS, PSE, IPAG Business School, Hanoi Foreign Trade University\\
	cuong.levan@univ-paris1.fr
	\and 
	Ngoc-Sang Pham   \at
	EM Normandie Business School, M\'etis Lab\\
	npham@em-normandie.fr
	\and 
	Cuong Tran Viet  \at
	TIMAS - Thang Long University \\
	tranviet14@gmail.com           
}

\date{February 27, 2026}
%The correct dates will be entered by the editor.

\maketitle

\begin{abstract}
This paper uses the Hartman--Stampacchia theorems as the primary tool to prove the Gale--Nikaid\^o--Debreu lemmas. It also establishes a cycle of equivalences among the Hartman--Stampacchia  theorems, the Gale--Nikaid\^o--Debreu lemmas, and Kakutani and Brouwer fixed-point theorems. 
\end{abstract}

\vspace{0.2cm}

%\noindent Communicated by S\'andor Zolt\'an N\'emeth. 

\keywords{Hartman--Stampacchia theorem, 
	Gale--Nikaid{\fontencoding{T5}\selectfont \ocircumflex}--Debreu lemma, Brouwer fixed-point theorem, Kakutani fixed-point theorem, general equilibrium, variational inequalities.}
\subclass{49J53 \and 49J40 \and  49K99 \and 91B50 \and 47H10}
%All acknowledgements should be placed in the back of the paper after Conclusions..
\section{ Introduction}\label{section:intro} 
%	\hspace{0.5cm}
Fixed point theorems are considered as an indispensable tool in various fields of mathematical economics. 
%The existence of an economic equilibrium was %simultaneously proven by \citet{ArrowDebreu1954} and \citet{McKenize1954}.
While the existence of an economic equilibrium could be established in many different ways, all classic proofs of the existence theorem rely on the Kakutani fixed--point argument (see Debreu's survey \cite{Debreu1982}). In the so-called \textit{excess demand} approach, the existence result is yielded from the existence of prices satisfying the \textit{Walras's law}. The core in this approach is a celebrated result known as the Gale--Nikaid\^o--Debreu lemma  \cite{Debreu1956,Debreu1959, Gale1955, Kuhn_1956, Nikaido56} (henceforth, GND lemma) %, 	%\citet{GaleMas-Colell1975, GaleMas-Colell1979}) 
whose proofs\footnote{We refer the interested readers to \cite{AliprantisBrownBurkinshaw1990,Debreu1982,Florenzano2003} for excellent treatments of the existence of equilibrium.} 
make use of the fixed--point theorems.\footnote{Debreu \cite{Debreu1956,Debreu1959}  and Nikaid\^o  \cite{Nikaido56} used the Kakutani fixed point theorem, Gale \cite{Gale1955} used the Knaster--Kuratowski--Mazurkiewicz lemma, and Kuhn \cite{Kuhn_1956}  made use of the Eilenberg and Montgomery fixed point theorem. 
In addition, Khan, McLean and Uyanik  \cite{KhanMcLeanUyanik_ETB25} recently provide  alternative proofs of the GND and Knaster--Kuratowski--Mazurkiewicz lemmas by using qualitative and generalized games.

The GND lemma has also been extended to more general settings. For instance, Yannelis \cite{YannelisJMAA1985}, Krasa and Yannelis \cite{KrasaYannelisET1994}, Cornet, Gou and Yannelis \cite{CornetGouYannelis2023} proved other versions of the GND lemma when the correspondence is upper demi-continuous (u.d.c.) (note that an upper semi--continuous correspondence is also u.d.c). For extensions of the GND lemma to infinite-dimensional frameworks, see \cite{CornetETB2020,CornetGouYannelis2023,Otsuka_ETB2024,YannelisJMAA1985}. For extensions of the GND lemma to settings with discontinuous preferences, see \cite{CornetETB2020,HeYannelis_JMAA2017,KhanMcLeanUyanik_ET2025,YannelisJMAA1985}.}

%Reviewer #2: The paper seems like a solid piece of work. However, no credit is given to earlier work on the GND Lemma. For example the weak Walras law was used in Yannelis JMAA 85, and Krasa-Yannelis ET 1994, together with the upped demi continuity of the excess demand correspondence (weaker than use). Also there are related papers that relax the continuity of the excess demand, e.g., He-Yannelis JMAA and ET 2017/2018 , Cornet ETB 2020 and others...The recent paper of Khan et all in ET 2025  is not mentioned. I am glad to accept a revised version of this paper but credit should be given to earlier contributions on the subject. The references should be updated and perhaps add open questions. It is not clear how this work could be extended to infinite dimensional spaces and also allow for discontinuous exceed demands.

As mentioned by Duppe and Weintraub \cite{DuppeWeintraub2014} and Khan \cite{Khan2021}, 
Debreu sought to explore the possibility of proving the GND lemma and its generalization without relying on the fixed-point argument. %(see \cite{DuppeWeintraub2014} and \cite{Khan2021})
Le et al. \cite{Le2022Sperner}
%The authors  
have been among the first to investigate this question by providing a proof of the GND lemma using Sperner's lemma (a combinatorial result on colorings of triangulations). %as the primary argument to prove the GND lemma. 
However, these authors %\cite{Le2022Sperner} 
do not prove the generalized versions of the GND lemma \cite{Florenzano2003,Florenzano1982}. 
%, given by  \cite{Florenzano1982} (or \cite{Florenzano2003}).%, by the means of Sperner's lemma.

Our paper aims to address Debreu's question by  providing a new proof of the GND lemma and its generalized versions derived directly from the Hartman--Stampacchia theorem\footnote{See Lemma $3.1$ in \cite{HartmanStampacchia1966}.} \cite{HartmanStampacchia1966}, a well-known result in the theory of variational inequalities.\footnote{D’Agata \cite{dAgata_MSS2022} extends the Hartman--Stampacchia theorem (\autoref{HS-thm}) to unbounded cases and applies it to establish the existence of a general equilibrium when the excess demand correspondence is single-valued.}

To achieve our goal, a generalized version of the Hartman--Stampacchia theorem (\autoref{HS-thm-generalized}) associated with upper semi-continuous and convex-valued correspondences is provided and proven.\footnote{Note that we also obtain a generalized version of Hartman--Stampacchia's theorem for the case of lower semi-continuous correspondence (see  \autoref{HS-thm-generalized-lwr} below).} Since we work with correspondences, we make use of some additional tools such as finite covering of a compact set, a partition of unity subordinated to a covering and the Carath\'eodory convexity theorem \cite{Caratheodory1907}. 	 This theorem %, %together with the original Hartman-Stampacchia theorem (\autoref{HS-thm}),
allows us to prove not only the GND lemma but also its  generalized version.  %\cite{Florenzano2003,Florenzano1982}. 
% in \cite{Florenzano1982} or \cite{Florenzano2003}. 

The second part of our paper establishes an equivalence cycle among the Hartman--Stampacchia theorems, the GND lemmas and some related fixed-point theorems. %Firstly, we use Hartman-Stampacchia's theorem to prove the Brouwer fixed-point theorem. Notice that \cite{KinderlehrerStampacchia2000} uses the Brouwer fixed-point theorem.
The original Hartman--Stampacchia theorem for continuous mappings (\autoref{HS-thm}, HS1) implies the one for upper-semi continuous correspondences 
(\autoref{HS-thm-generalized}, HS2), which yields, in turn, the GND lemma (\autoref{Florenzano1982-thm}). As shown in \cite{Florenzano1982}, the GND lemma implies the Kakutani theorem, which straightforwardly entails the Brouwer theorem. Finally, the original Hartman-Stampacchia theorem for continuous mappings can be derived from the Brouwer theorem, thus completing a cycle of equivalences. Moreover, in Section \ref{Equivalnce-relation}, we prove the Kakutani and Brouwer theorems using the original Hartman--Stampacchia theorem.  %(\autoref{HS-thm}, HS1). 
All of these results are illustrated in \autoref{fig-equivalence}.  
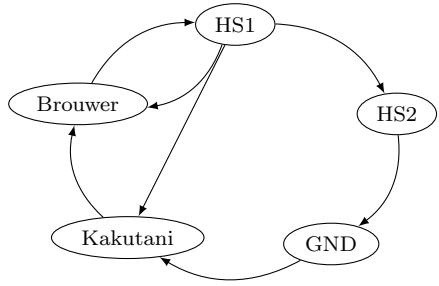
\begin{figure}[h!]
	\centering
	%\documentclass{article}
	%\usepackage{tikz} 
	%\usetikzlibrary{shapes}
	%\begin{document} 
	%	\begin{figure} 
		\begin{tikzpicture} [scale = 0.20] 
			
			% define the nodes
			\node[ellipse, draw] at (11*cos 90, 8*sin 90) (a) {HS1};
			\node[ellipse, draw] at (11*cos 15, 8*sin 15) (b) {HS2}; 
			\node[ellipse, draw] at (11*cos 305, 8*sin 305) (c) {GND};
			\node[ellipse, draw] at (11*cos 230, 8*sin 230) (d) {Kakutani};
			\node[ellipse, draw] at (11*cos 160, 8*sin 160) (e) {Brouwer};
			% draw the paths 
			%            \draw[->] (a) to [bend left=20] (b);
			\draw[-latex, bend left] (a) edge (b); 
			%			\draw[->] (b) to [bend left=20] (c); 
			\draw[-latex, bend left] (b) edge (c);
			%			\draw[->] (c) to [bend left=20] (d); 
			\draw[-latex, bend left] (c) edge (d);
			%			\draw[->] (d) to [bend left=20] (e); 
			\draw[-latex, bend left] (d) edge (e);
			\draw[-latex, bend left] (e) edge  (a); 
			\draw[-latex, bend left] (a) edge (e);
			\draw[-latex ] (a) edge  (d);
		\end{tikzpicture}  
		%	\end{figure}
	%\end{document}
	%	\input{Picture/equivalence1.tex}
	%	\input{Picture/equivalence.tex}
	\caption{\footnotesize {The cycle of equivalences: Hartman--Stampacchia theorem (\autoref{HS-thm}, HS1), generalized version of Hartman--Stampacchia for correspondence (\hyperref[HS-thm-generalized]{Theorem \ref{HS-thm-generalized}}, HS2), GND lemma, Kakutani and Brouwer fixed-point theorems}.}
	\label{fig-equivalence}
\end{figure}

Our paper is related to the connection between the theory of variational inequalities and general equilibrium theory. Using variational inequalities,  \cite{mor07,zhao93} investigate the existence of general equilibrium in economies with production while \cite{donato18,donato23}
prove the existence of general equilibrium in economies with incomplete financial assets. Our paper contributes to this literature because the GND lemma is  key for equilibrium existence.
%	Some authors \citep{zhao93, mor07,  donato18, donato23} use variational inequalities to prove the existence of general equilibrium existence in different economic models. 
%	\cite{zhao93}, \cite{donato18}, \cite{donato23}, \cite{zhao93}, \cite{mor07}.

There have been many efforts to provide a proof of the existence result without using Kakutani's fixed-point argument, including studies by \cite{BarbollaCorchon1989, Fraysse2009, Greenberg1977, John1999, Mackowiak2010, Quah2008}. These studies generally fall into two categories, both of which impose additional conditions on \textit{excess demand function} or \textit{correspondence}. The first category relies on the \textit{gross substitutes} assumption (e.g., \cite{BarbollaCorchon1989, Fraysse2009, Greenberg1977}) while the second is based on the  
\textit{weak axiom of revealed preference} assumption (e.g., \cite{Mackowiak2010, Quah2008}). Since we provide a new proof of the GND lemma without using the Kakutani fixed-point theorem but instead using the the Hartman--Stampacchia theorem, our paper contributes to this line of research.

The paper proceeds as follows. In the next section, we begin with some notations and definitions, the Hartman--Stampacchia theorem (\autoref{HS-thm}) and its generalization 
(\autoref{HS-thm-generalized}), which are of fundamental importance for the proof of the GND lemma and its extended version. Section \ref{proofs} contains the main result of the
paper, namely the proofs of the GND lemma and its extended version without using the Kakutani fixed-point argument. Section \ref{Equivalnce-relation} is dedicated to showing how the Hartman--Stampacchia theorem can be used to prove the Brouwer and Kakutani theorems, establishing the cycle of equivalences. %Additionally, we demonstrate in Section \ref{Hartman-Stampacchia} that Hartman-Stampacchia theorem is a consequence of Brouwer theorem. 
Finally, some extended and alternative proofs are given in  %\ref{proofs-sec} 
\autoref{proofs-sec}.

\section{ Preliminaries}\label{intro:sec} 
\subsection{Notations and Definitions}
We start by introducing some notations that we use in this paper.  We denote the following:
\begin{itemize}[itemsep=0.7em, label=$\bullet$]
	\item $\langle x, y \rangle$ as the inner product and $||x||$ as the Euclidean norm for $x, y \in \mathds{R}^N$,
	\item 
$0_N$ and $0_m$ as the zero vectors in $\mathds{R}^N$ and $\mathds{R}^m$,
%\item $B(x, r)$ and $\olsi B(x, r)$ as the open and closed balls in $\mathds{R}^N$ centered at $x$ (or $0_N$) with radius $r$ (or $1$), and $S$ as the unit sphere of $\olsi B$.

\item $B(x,r)$ and $\olsi B(x,r)$ (resp. $B$ and $\olsi B$)  as the open and closed balls in $\mathds{R}^N$ centered at $x$ (resp $0_N$) and radius $r$ (resp $1$) and $S$ as the unit-sphere associated with $\olsi  B$, 
\item 
$P^\circ = \{ z \in \mathds{R}^N \mid \langle p, z \rangle \leq 0 \text{ } \forall p \in P \}$ as the polar cone of $P$,
\item 
$\Delta = \{ x \in \mathds{R}^N \mid \sum_{i=1}^{N} x_i = 1, x_i \geq 0 \text{ for } i=1,2,...,N \}$ as the unit simplex,
\item 
$N_C(x) = \{ u \in \mathds{R}^N \mid \langle u, y - x \rangle \leq 0 \text{ } \forall y \in C \}$ as the normal cone to $C$ at $x$,
\item 
$P^\mathsf{c}$ as the complement of $P$ for $P \subset \mathds{R}^N$,
\item 
$span(P) = \{ \sum_{i=1}^{k} \beta_i u_i \mid k \in \mathds{N}, \beta_i \in \mathds{R}, u_i \in P \}$ as the span of $P$,
\item 
$\text{int}(P) = \{ x \in P \mid \exists r > 0, B(x, r) \subset P \}$ as the interior of $P$,
\item $\text{ri}(P)$ as the relative interior of $P$, i.e., $\text{ri}(P)\equiv \{x\in P  \mid \exists r > 0, B(x, r)\cap \text{aff}(P) \subset P \}$, where aff$(P)$ is the affine hull of $P$,\footnote{Recall that aff$(P)$ is the intersection of all affine sets of $\rr^N$ containing $P$.}
\item 
$\text{Bd}^\text{r}(P) = \olsi P \setminus \text{ri}(P)$ as the relative boundary of $P$, where $\olsi P$ denotes the closure of $P$.
\end{itemize}
\hspace{0.5cm}
Let us also introduce some basic concepts. A subset $P \subset \mathds{R}^N$ is called a \textit{cone with vertex $0_N$} if, for every $x\in P$ and $\lambda > 0$, we have $\lambda x \in P$.  A \textit{convex cone} is a cone that is a convex set.

%recall the definition and some properties of \textit{upper semi-continuous correspondence}.

Let $X,Y$ be non-empty topological spaces. 
A correspondence $\Gamma: X \rightrightarrows Y$ is \textit{upper semi-continuous (u.s.c.)} at a point $ x \in X$
if, for every open set $V \subset Y$ such that $\Gamma(x)\subset V$, there exists a neighborhood $U \subset X$ of $x$ such that $\Gamma(U)\subset V$. The correspondence  $\Gamma$ is \textit{upper semi-continuous on $X$} if it is upper semi-continuous at every point in $X$.  

The correspondence $\Gamma$ is \textit{lower semi-continuous (l.s.c.)} at a point $x \in X$ if, for every open set $V\subset Y$ such that $\Gamma(x)\cap V\not=\emptyset$, there exists a neighborhood $U \subset X$ of $x$ such that for all $z\in U$, we have $\Gamma(z)\cap V\not=\emptyset$. The correspondence  $\Gamma$ is \textit{lower semi-continuous} on $X$ if it is lower semi-continuous at every point in $X$.

%Recall that if $\Gamma$ is single-valued, the notions of \textit{continuity}, \textit{upper semi-continuity}, and the \textit{lower semi-continuity} turn out to be equivalent.  

%(i) $\Gamma (x) $ is compact, non-empty, and (ii) for any sequence $\{x_n\}$ converging to $x$, for any sequence $\{y_n\}$  with $y_n \in \Gamma (x_n), \forall n$, there exists a subsequence $\{y _{n_k}\}$ which converges to $y\in \Gamma (x)$.
%\end{definition}

Notice that if $Y$ is compact, then $\Gamma$ is upper semi-continuous if and only if $\Gamma$ is closed, that is, the graph of $\Gamma$ is closed. Additionally, if $\Gamma$ is upper semi-continuous and $K\subset X$ is compact, then (1) $\Gamma (K) $ is compact if Y is compact and  $\Gamma$ has closed values, and (2) $\Gamma(K)$ is compact if $\Gamma$ is compact valued.

\subsection{The Hartman--Stampachia Theorem and its Generalization} \label{HS:sec}
Let us recall the following result from \citep{HartmanStampacchia1966} (see Lemma $3.1$ on page $276$). %in the finite dimensional setting. 
%We state with more details the Hartman-Stampacchia theorem. 
\begin{theorem}[Hartman--Stampacchia theorem]  \label{HS-thm}
	Let $K$ be a compact convex set in $\mathds R^N$, $f$ a continuous mapping from $K$ to $\mathds R^N$.
	%\begin{enumerate} [(i)]
	%\item 
	Then, there exists $\olsi  u \in K$ such that 
	\begin{equation} \label{HS inequality}
		\langle v, f(\olsi  u) \rangle \leq \langle \olsi  u, f(\olsi  u) \rangle \quad \forall v \in K. 
	\end{equation} 
	%\item Assume that $\partial K$ is $C^1$. If the inequality \eqref{HS inequality} fails for all $u_0 \in \partial K$, there exists $\olsi {x} \in C$ such that $f(\olsi {x}) =0$. 
	%\end{enumerate}
\end{theorem}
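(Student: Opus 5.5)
The plan is to derive the statement from the Brouwer fixed-point theorem via the metric projection onto $K$; this is the arrow Brouwer $\Rightarrow$ HS1 in \autoref{fig-equivalence}. Let $\pi_K:\mathds R^N\to K$ denote the Euclidean projection onto the nonempty compact convex set $K$. Recall its variational characterization: $w=\pi_K(z)$ if and only if $w\in K$ and
\[
\langle z-w,\, v-w\rangle \le 0 \quad \forall v\in K,
\]
that is, $z-w\in N_K(w)$. Recall also that $\pi_K$ is nonexpansive, so in particular it is continuous. I would state both facts with short proofs. Existence and uniqueness of the nearest point follow from compactness and strict convexity of $\|\cdot\|^2$. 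The inequality comes from differentiating $t\mapsto \|z-(w+t(v-w))\|^2$ at $t=0^+$. Nonexpansiveness follows by adding the two variational inequalities written at $\pi_K(z)$ and $\pi_K(z')$.

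Next I would define $g:K\to K$ by $g(u)=\pi_K\bigl(u+f(u)\bigr)$. This map is continuous, being a composition of continuous maps, and it sends the compact convex set $K$ into itself. The Brouwer fixed-point theorem, which holds for any nonempty compact convex subset of $\mathds R^N$ since such a set is homeomorphic to a closed ball of some dimension (or via a retraction argument), then gives $\olsi u\in K$ with $\olsi u=\pi_K(\olsi u+f(\olsi u))$.

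Applying the characterization with $z=\olsi u+f(\olsi u)$ and $w=\olsi u$ yields $\langle f(\olsi u), v-\olsi u\rangle\le 0$ for all $v\in K$. This is exactly \eqref{HS inequality}.

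The main point needing care is the use of Brouwer on an arbitrary compact convex $K$ rather than on a ball or simplex. To handle this I would extend the map: take a closed ball $\olsi B(0,R)\supset K$ and apply Brouwer to $g\circ\pi_K$ on the ball. Any fixed point lies in $K$, because the image is contained in $K$, and so it is a fixed point of $g$. This avoids any homeomorphism argument. Everything else is routine.
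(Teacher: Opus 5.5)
Your proposal is correct and follows essentially the same route as the paper. In the remark after the proof of \autoref{Brouwer-thm}, citing Kinderlehrer--Stampacchia, the paper applies Brouwer to $g(x)=\pi_C(x+\zeta(x))$ and reads $\zeta(\olsi x)\in N_C(\olsi x)$ off the fixed point; you additionally supply the projection facts the paper leaves implicit. Your ball-extension step is harmless but unnecessary, since \autoref{Brouwer-thm} is already stated for an arbitrary non-empty compact convex set.
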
 
%\begin{remark} 
%		In \cite{KinderlehrerStampacchia2000} (Theorem I.3.i), 
The Hartman--Stampacchia theorem (\autoref{HS-thm})
was proven by Hartman and Stampacchia \cite{HartmanStampacchia1966} 
 using the index theory. Notably, the point $f(\olsi {u}) \in N_K(\olsi {u})$, meaning $f(\olsi u)$ lies the normal cone $N_K(\olsi u)$ to the set $K$ at $\bar u$. 
%	\hyperref[HS-thm]{Theorem \ref{HS-thm}} can be proven by using  the index theory or the topological degree theory.  
%	The proof could be found in \cite{KinderlehrerStampacchia2000} 
%	(TO PROVIDE Reference). 
%\end{remark}
Before proceeding with a detailed generalization of \autoref{HS-thm}, we introduce a useful lemma 
%\footnote{A very similar idea in \autoref{representation-lemma} can be found in \cite{Cellina1969a}. However, it is used for different purposes. While  in Theorem $1$'s \cite{Cellina1969a} it is used to build a continuous mapping whose graph is separated with the correspondence with a small distance, in our paper it is used to extend the Hartman Stamppachia's theorem to correspondence.}
%\footnote{ A very similar idea in \autoref{representation-lemma} can be found in \cite{Cellina1969a}. However, the formulation of the mapping $f$ in the lemma differs from that of Theorem 1 in \cite{Cellina1969a} since both mappings are based on $2$ different structures of finite covering. Besides, 	this version was written before reading Cellina's paper.and is simpler due to the finite dimensional setting. }           
describing the value of a continuous mapping in terms of a finite linear combination of vectors. We will later see that \autoref{representation-lemma} below plays a critical role in the proof of the generalized Hartman--Stampachia  theorem.% and enables us to use the \q{compactness argument}.
%Note that the proof of this lemma involves \autoref{representation-lemma} stated in \autoref{proofs}. 
\begin{lemma}\label{representation-lemma}
	Let $C$ be a non-empty and compact set in $\mathds{R}^N$, $\zeta$ a non-empty valued correspondence from $C$ to $\mathds R^N$. 
	Let $r>0$. Then, there exists a continuous mapping $f: C \to \mathds{R}^N$ satisfying the following condition: 
	\begin{conditionp}{R} 
		\label{representation00}
		For each $x \in C$, there exist at most $N+1$ vectors $z^1,\dots,z^{N+1}$ in $\zeta \big(B(x,r)\big)$ and positive numbers $\beta_1,\dots, \beta_{N+1} $ such that\footnote{Note the convention that superscripts are used for labelling vectors while subscripts denote real numbers. For example, as in \hyperref[representation00]{Condition \ref{representation00}}, the parameters $\beta_1,\dots, \beta_{N+1}$ are real numbers, and $z^1,\dots, z^{N+1}$ are vectors belonging to the finite-dimensional space $\mathds R^N$.}
		\begin{equation}  \label{representation-condition}
			f(x)= \sum_{i=1}^{N+1} \beta_i z^i	    
		\end{equation}
		with $\sum_{i= 1}^{N+1} \beta_i =1$.
	\end{conditionp} 
\end{lemma}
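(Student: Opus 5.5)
The plan is the standard partition-of-unity construction, followed by Carath\'eodory's theorem to cut the number of vectors down to $N+1$.

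\emph{Step 1: finite cover.} Since $C$ is compact, the open balls $\{B(y,r/2)\}_{y\in C}$ admit a finite subcover $B(y^1,r/2),\dots,B(y^m,r/2)$. Because $\zeta$ is non-empty valued, choose for each $j$ a vector $w^j\in\zeta(y^j)$.

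\emph{Step 2: partition of unity and the map $f$.} Put $\varphi_j(x)=\max\{0,\,r/2-\|x-y^j\|\}$. Each $\varphi_j$ is continuous, and $\varphi_j(x)>0$ exactly when $x\in B(y^j,r/2)$. The cover guarantees $\sum_j\varphi_j(x)>0$ for every $x\in C$, so
$$\psi_j(x)=\frac{\varphi_j(x)}{\sum_{k=1}^m\varphi_k(x)}$$
is a continuous partition of unity subordinated to the cover. Define
$$f(x)=\sum_{j=1}^m\psi_j(x)\,w^j.$$
This map is continuous as a finite combination of continuous functions with fixed vectors.

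\emph{Step 3: localisation.} Fix $x\in C$ and let $J(x)=\{j:\psi_j(x)>0\}$, which is non-empty. For $j\in J(x)$ we have $\|x-y^j\|<r/2<r$, so $y^j\in B(x,r)$ and hence $w^j\in\zeta(B(x,r))$. Therefore $f(x)$ is a convex combination, with positive weights, of the finitely many vectors $\{w^j\}_{j\in J(x)}\subset\zeta(B(x,r))$. This means $f(x)\in\operatorname{conv}\{w^j: j\in J(x)\}$.

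\emph{Step 4: Carath\'eodory reduction.} By the Carath\'eodory convexity theorem in $\mathds R^N$, $f(x)$ is a convex combination of at most $N+1$ of these $w^j$. Discarding the vectors that receive zero weight leaves vectors $z^1,\dots,z^k$ with $k\le N+1$, all in $\zeta(B(x,r))$, and strictly positive $\beta_i$ summing to $1$ with $f(x)=\sum_i\beta_i z^i$. This is exactly Condition~\ref{representation00}, read with ``at most $N+1$''.

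The only delicate point, which is mild, is keeping the positivity requirement on the $\beta_i$ and the count $\le N+1$ simultaneously. The Carath\'eodory selection may change the weights, so zero weights must be discarded after the selection, not before. Continuity of $f$ is never threatened by this: the Carath\'eodory step is applied pointwise only to verify the representation, and it does not enter the definition of $f$.
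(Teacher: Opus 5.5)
Your proposal is correct and follows essentially the same route as the paper. Both arguments take a finite cover of $C$ by open balls centred in $C$, build a subordinated partition of unity that weights chosen values $w^j\in\zeta(y^j)$, show by localisation that $f(x)\in\mathrm{co}\big(\zeta(B(x,r))\big)$, and then apply Carath\'eodory's theorem to get at most $N+1$ vectors with strictly positive weights. The differences are cosmetic: you write the partition of unity explicitly and use radius $r/2$, whereas the paper cites a generic partition of unity and uses radius $r$, which already suffices because $\|x-x^i\|<r$ is symmetric in $x$ and $x^i$.
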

\noindent %\hyperref[prove-representation-lemma]{\nameref{prove-representation-lemma}} 
See the %\hyperref[prove-representation-lemma]{\nameref{prove-representation-lemma}} 
\nameref{prove-representation-lemma} 
in 	\autoref{proofs-sec}. %\hyperref[proofs-sec]{Proofs Section} %on \autopageref{prove-representation-lemma}.  

\noindent Note that the $N+1$ vectors $z^1,\dots,z^{N+1}$ and the numbers $\beta_1, \dots,\beta_{N+1}$ are allowed to depend on the parameters $x,r$ and $M$ functions $(\alpha_i)_{i=1}^M$ (see these functions in the proof). However, for simplicity, these dependencies are omitted. 

\begin{remark} 
	\autoref{representation-lemma} 
can be interpreted as follows: the value $f(x)$ of a continuous mapping is expressed as a convex combination of at most $N+1$ elements from $\zeta\big( B(x,r)\big)$. This structure, where the number of terms in the combination is fixed, enables us to apply the \q{\textit{compactness argument}}. 
\end{remark}
\begin{remark} \label{representaion-extra}
	In 	\autoref{representation-lemma}, if we additionally assume that $C$ is convex in $\mathds{R}^N$ and the correspondence $\zeta$ is from $C$ to  $C$, then  the conclusion regarding the existence of a continuous mapping $f$ satisfying \hyperref[representation00]{Condition \ref{representation00}}  remains the same, but such a mapping $f$ is defined from $C$ into $C$. 
	%		The conclusion of \autoref{representation-lemma} still holds under  two extra assumptions. More precisely, in addition to the assumptions of \autoref{representation-lemma}, assume that $C$ is convex in $\mathds{R}^N$ and the correspondence $\zeta$ is from $C$ into $C$. The conclusion of the existence of a continuous mapping $f$ satisfying \hyperref[representation00]{Condition \ref{representation00}}  is the same. Besides, the mapping $f$ is admitted from $C$ into $C$. 
\end{remark}   

We now introduce an extension of  
 \hyperref[HS-thm]{Hartman--Stampacchi theorem} 
(\autoref{HS-thm}) to correspondences. This extension concerns some characteristics of the correspondence. The extension of Hartman--Stampacchia theorem to the case of upper semi-continuous correspondence is a key step in proving the generalized GND lemma. The precise formulation of the generalization is stated in \autoref{HS-thm-generalized}. 
%One of appropriate generalization of \nameref{HS-thm} is to replace the function $f$ by a correspondence $\zeta$. With this idea in mind, the vector $\olsi  u$ stated in \autoref{HS-thm} is kept the same, but the vector $f(\olsi  u)$ is substituted by a vector $z$ in $\zeta(\olsi  u)$. We state the theorem below. 

The proof of the theorem builds on the original  \autoref{HS-thm} and relies on the \q{\textit{compactness argument}} (\autoref{representation-lemma}), which involves the concepts of \textit{unity subordinated to a covering} and the \textit{Carath\'eodory convexity theorem}.\footnote{Browder \cite{Browder68} (Theorem $6$) provides a generalized version of \autoref{HS-thm-generalized} for a locally convex space. While we work in finite-dimensional spaces, our proof appears to be simpler than his.} Additionally, for completeness, we include another generalized Hartman--Stampacchia theorem for \textit{lower semi-continuous correspondences}, expressed in \autoref{HS-thm-generalized-lwr}. The proof of this corollary makes use of a continuous selection theorem based on the work of Michael \cite{Michael1956}. 
\begin{theorem} [Hartman--Stampacchia theorem for upper semi-continuous correspondence] \label{HS-thm-generalized} Let $C$ be a compact convex set in $\mathds R^N$, $\zeta$ a non-empty, convex and compact valued  correspondence from $C$ to $\mathds R^N$. If $\zeta$ is upper semi-continuous, then there exist some $x \in C$ and $z \in \zeta(x)$ such that 
	\begin{equation*}
		\langle p,z \rangle \le \langle x,z \rangle \quad  \forall p \in C.  
	\end{equation*} 
\end{theorem}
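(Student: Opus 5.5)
We approximate the correspondence $\zeta$ by continuous maps, apply \autoref{HS-thm} to each one, and pass to the limit using \autoref{representation-lemma}.

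\emph{Approximation.} For each $n\in\mathds N$ we apply \autoref{representation-lemma} with $r=1/n$. This gives a continuous $f_n:C\to\mathds R^N$ satisfying \hyperref[representation00]{Condition \ref{representation00}}. By \autoref{HS-thm} there is $x_n\in C$ with
\begin{equation*}
\langle p, f_n(x_n)\rangle \le \langle x_n, f_n(x_n)\rangle \quad \forall p\in C.
\end{equation*}
By Condition \ref{representation00} we can write $f_n(x_n)=\sum_{i=1}^{N+1}\beta_i^n z^{i,n}$. Here $\beta_i^n\ge 0$ and $\sum_i\beta_i^n=1$; if fewer than $N+1$ vectors occur, we pad with zero weights. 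Each $z^{i,n}\in\zeta(y^{i,n})$ for some $y^{i,n}\in B(x_n,1/n)\cap C$.

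\emph{Compactness.} Since $\zeta$ is u.s.c. with compact values and $C$ is compact, $\zeta(C)$ is compact (property (2) recalled in the preliminaries). So all the $z^{i,n}$ lie in a fixed compact set. Passing to a common subsequence, we get $x_n\to x\in C$, $\beta_i^n\to\beta_i$ with $\beta_i\ge0$ and $\sum\beta_i=1$, and $z^{i,n}\to z^i$ for every $i$. Moreover $y^{i,n}\to x$, since $\|y^{i,n}-x_n\|<1/n$.

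\emph{Main step: showing $z^i\in\zeta(x)$.} This is the place where upper semi-continuity is used, and it is the key point of the proof. Fix $\varepsilon>0$. The set $V=\zeta(x)+B(0_N,\varepsilon)$ is open and contains $\zeta(x)$. By u.s.c. at $x$, for $n$ large we have $\zeta(y^{i,n})\subset V$, hence $z^{i,n}\in V$. Letting $n\to\infty$ gives $z^i\in\olsi V$, so $\operatorname{dist}(z^i,\zeta(x))\le\varepsilon$. As $\varepsilon>0$ is arbitrary and $\zeta(x)$ is closed, $z^i\in\zeta(x)$. Since $\zeta(x)$ is convex, $z:=\sum_i\beta_i z^i\in\zeta(x)$.

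\emph{Passing to the limit.} We have $f_n(x_n)=\sum_i\beta_i^n z^{i,n}\to z$ and $x_n\to x$. Taking limits in the inequality, which is continuous in $(x_n,f_n(x_n))$ for each fixed $p$, gives $\langle p,z\rangle\le\langle x,z\rangle$ for all $p\in C$. This is the claim.

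The point to watch is that the number of terms in the convex combination is bounded by $N+1$, independently of $n$, which is what \autoref{representation-lemma} guarantees. Without this bound we could not extract convergent subsequences of the coefficients and vectors simultaneously.
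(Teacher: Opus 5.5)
Your proof is correct and follows the paper's argument essentially step for step. You approximate $\zeta$ via \autoref{representation-lemma} with radii tending to $0$, apply \autoref{HS-thm} to each approximation, use the uniform bound of $N+1$ terms to extract convergent subsequences, and conclude with upper semi-continuity and convexity of $\zeta(x)$. The only difference is cosmetic: you get convergence of the $z^{i,n}$ from compactness of $\zeta(C)$ and then check $z^i\in\zeta(x)$ through the $\varepsilon$-enlargement of $\zeta(x)$, whereas the paper directly invokes the sequential characterization of upper semi-continuity for compact-valued correspondences; your version is, if anything, more explicit.
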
  
\noindent See \nameref{proof-HS-thm-generalized} in the 
\autoref{proofs-sec}.
%\hyperref[proofs-sec]{Proofs Section} 
%on \autopageref{proof-HS-thm-generalized}.  
%\subsection{Proof of \autoref{HS-thm-generalized}}  

%The proof of it is postponed in the appendix. 
%\noindent See \nameref{proof-HS-thm-generalized} in Appendices on \autopageref{proof-HS-thm-generalized}. 

%\begin{remark}	\end{remark}

\begin{corollary} [Hartman--Stampacchia theorem for lower semi - continuous correspondence] \label{HS-thm-generalized-lwr} Let $C$ be a compact convex set in $\mathds R^N$, $\zeta$ a non-empty, convex and compact valued  correspondence from $C$ to $\mathds R^N$. If $\zeta$ is lower semi-continuous, then there are some $x \in C$ and $z \in \zeta(x)$ such that 
	\begin{equation*}
		\langle p,z \rangle \le \langle x,z \rangle \quad  \forall p \in C.  
	\end{equation*} 
\end{corollary}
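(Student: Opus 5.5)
The plan is to reduce the corollary to the original Hartman--Stampacchia theorem (\autoref{HS-thm}) by means of a continuous selection. The hypotheses on $\zeta$ are exactly those of Michael's selection theorem \cite{Michael1956} in finite dimensions: $C$ is a compact convex subset of $\mathds R^N$, hence a metric (so paracompact) space, and $\zeta$ is lower semi-continuous with non-empty, closed (indeed compact) and convex values in the Banach space $\mathds R^N$. Michael's theorem therefore provides a continuous mapping $f: C\to\mathds R^N$ with $f(x)\in\zeta(x)$ for every $x\in C$.

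Once $f$ is in hand, the argument is immediate. Since $C$ is compact and convex and $f$ is continuous, \autoref{HS-thm} yields $\olsi u\in C$ such that
\begin{equation*}
\langle p, f(\olsi u)\rangle \le \langle \olsi u, f(\olsi u)\rangle \quad \forall p\in C.
\end{equation*}
Setting $x=\olsi u$ and $z=f(\olsi u)\in\zeta(x)$ gives the conclusion of the corollary.

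The only substantive step is the existence of the continuous selection, so the main point to check is that Michael's hypotheses are met. In particular, lower semi-continuity must be used in the open-set form defined in the paper, and the values must be closed, which compactness ensures. If a self-contained argument were preferred, I would construct the selection via the standard finite-dimensional route. First build $\varepsilon$-approximate selections: for each $y\in C$ pick $z_y\in\zeta(y)$; by lower semi-continuity the sets $U_y=\{x : \zeta(x)\cap B(z_y,\varepsilon)\neq\emptyset\}$ are open and cover $C$. Take a finite subcover and a partition of unity subordinate to it, and form $\sum_i\alpha_i(x)z_{y_i}$; by convexity of $\zeta(x)$ this lies within $\varepsilon$ of $\zeta(x)$. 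Then iterate with $\varepsilon=2^{-n}$, intersecting $\zeta$ with balls around the previous approximation (lower semi-continuity is preserved), to obtain a uniformly Cauchy sequence whose limit is a continuous selection, using closedness of the values.

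I expect this iteration, specifically verifying that the modified correspondences stay lower semi-continuous with non-empty values, to be the only delicate point. Since the paper explicitly invokes Michael's theorem, I would simply cite it.
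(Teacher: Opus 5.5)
Your proposal is correct and matches the paper's proof: the paper also takes a continuous selection $f$ of $\zeta$ from Michael's theorem, applies \autoref{HS-thm} to $f$ on $C$, and sets $z=f(x)\in\zeta(x)$. Your self-contained sketch of the selection (approximate selections via partitions of unity, then iteration with shrinking balls) is the standard finite-dimensional proof of Michael's theorem. It is not needed once the theorem is cited.
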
 
\noindent See \nameref{proof-HS-thm-generalized-lwr} in \autoref{proofs-sec}.% on \autopageref{proof-HS-thm-generalized-lwr}. 
\iffalse 
\begin{remark} From identity \eqref{convergence}, if we define a mapping $f:X\mapsto \mathds R^N$ by $f(x) = \lim\limits_{k \to \infty} f^{n_k}(x_{n_k})$, we can see $f$ as a selection mapping of $\zeta$; it is possible that the selection mapping $f$ is not continuous on $X$. This is, of course, in marked contrast to the property of the selection mapping in the proof for lower semi-continuity, namely \autoref{selection-thm}, where the selection mapping is continuous. 
	%	Comparing this with the lower continuity case, namely \autoref{HS-thm-generalized-lwr}, the selection mapping is continuous. 
	% 	 \red{Should be a small remark about the proof using the mapping $f$}
\end{remark} 
\fi 
\section{Gale--Nikaid\^o--Debreu Lemmas }\label{proofs} 
In this section, we aim 
to provide proofs for not only the
%the Gale-Nikkadô-Debreu lemma 
\hyperref[GNDstrong]{Gale--Nikaid\^o--Debreu lemma} (\autoref{GNDstrong})
but also its generalized version (\autoref{Florenzano1982-thm}). The proofs are constructed using the \hyperref[HS-thm]{Hartman--Stampacchia} theorem and the \hyperref[HS-thm-generalized]{extended  Hartman--Stampacchia} theorem, with detailed formulations of theses results given in  Section \ref{GND-lemma}. The main arguments involved in the proofs are \autoref{representation-lemma}, \autoref{HS-thm-generalized},
 and the concept of retract mapping. The first of these is introduced in Section \ref{HS:sec}, while the notion of retract mapping, along with a supporting lemma, is discussed in Section \ref{rtrct-mppng}.  Finally, the direct proofs for % \hyperref[GNDstrong]{Theorems}
Theorems \ref{GNDstrong} and \ref{Florenzano1982-thm} are provided in Sections \ref{prv-GND} and \ref{Flo82-prf}, respectively. 
%	Finally, 
%	by the means of \nameref{HS-thm} (\autoref{HS-thm}) and its generalized version (\autoref{HS-thm-generalized}), we give the direct proofs in Sections \ref{prv-GND} and \ref{Flo82-prf} respectively.

%The proofs of \autoref{GNDstrong} and \autoref{Florenzano1982-thm} are given in Sections \ref{prv-GND} and \ref{Flo82-prf} respectively. 
\subsection{Gale--Nikaid{\fontencoding{T5}\selectfont \ocircumflex}--Debreu Lemma and its Generalized Version} \label{GND-lemma}
% \nameref{GNDstrong} (\autoref{GNDstrong}) and its generalized version by Florenzano(1982) ( \autoref{Florenzano1982-thm}) by using  Hartman-Stampacchia theorem and its generalized version. For this purpose, we deal with two separate cases: the correspondence $\zeta$ is either single-valued or multi-valued. In the latter case, $\zeta$ is an upper semi-continuous correspondence with convex, compact values; the proof of this case splits into two subcases.  
%The existence of an economic equilibrium was proven by 
Arrow and Debreu 
\cite{ArrowDebreu1954} and McKenzie  \cite{McKenize1954} simultaneously proved a fundamental equilibrium existence result in theoretical economics. 
%Arrow and Debreu (1954) 
%use the Kakutani's fixed-point theorem as a tool to prove the existence of an economic equilibrium. 
Later, with the papers  of \cite{Debreu1956, Gale1955, Nikaido56, Kuhn_1956}, a celebrated formulation known as  
\hyperref[GNDstrong]{Gale--Nikaid\^o--Debreu lemma} (\autoref{GNDstrong})
%Gale-Nikaid\^o-Debreu lemma 
 or the excess demand theorem \cite{CornetGouYannelis2023} was developed, providing a deeper economic explanation. 
%Later Flonrenzano(1982) gives a generalization of this lemma.  
%The original Gale-Nikaido-Debreu lemma is given in 
%The core argument for the classic proofs\footnote{We refer to \cite{Debreu1959,Debreu1982} for more details.} of the lemma is based on the Kakutani fixed-point theorem. 
Let us recall the GND lemma. 
% The proof makes use of Kakutani Theorem.    
\begin{theorem}[Gale--Nikaid\^o--Debreu lemma]\label{GNDstrong}
	Let $\Delta$ be the unit-simplex in $\mathds R^N$.  Let $\zeta$ be an upper semi-continuous correspondence with non-empty, compact, convex values from $\Delta$ to $\mathds R^N$. Suppose $\zeta$ satisfies the following condition:
	\begin{align}\label{gnd-condition}
		\forall p \in \Delta , \;\; \forall  z\in \zeta (p), \;\; \langle p, z \rangle  \leq 0.
	\end{align}
	Then, there exists $\olsi  p \in \Delta$ such that $\zeta (\olsi  p )\cap\mathds R^N _- \neq \emptyset $.
\end{theorem}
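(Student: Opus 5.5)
The plan is to apply \autoref{HS-thm-generalized} directly with $C=\Delta$, which is compact and convex. The correspondence $\zeta$ is upper semi-continuous with non-empty, compact, convex values, so the theorem gives some $\olsi p\in\Delta$ and $\olsi z\in\zeta(\olsi p)$ such that
\begin{equation*}
\langle p,\olsi z\rangle \le \langle \olsi p,\olsi z\rangle \quad \forall p\in\Delta .
\end{equation*}
The aim is to show that this same $\olsi z$ lies in $\mathds R^N_-$, so that $\olsi p$ is the desired point.

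First, the hypothesis \eqref{gnd-condition} at the point $\olsi p$ gives $\langle \olsi p,\olsi z\rangle\le 0$. Combining this with the variational inequality yields $\langle p,\olsi z\rangle\le 0$ for every $p\in\Delta$. Next, take $p=e^i$, the $i$-th unit vector, which belongs to $\Delta$. This gives $\olsi z_i\le 0$ for each $i=1,\dots,N$. Hence $\olsi z\in\mathds R^N_-$, and therefore $\zeta(\olsi p)\cap\mathds R^N_-\neq\emptyset$.

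There is essentially no obstacle at this stage, since all the difficulty sits inside \autoref{HS-thm-generalized}. That theorem is itself obtained from \autoref{HS-thm} through the approximation of \autoref{representation-lemma}. There one takes continuous $f^n$ built with radius $r=1/n$ and obtains HS points $x_n$. By Carathéodory, each $f^n(x_n)$ is a convex combination of at most $N+1$ points of $\zeta(B(x_n,1/n))$. One then passes to subsequences of all these points and weights, and uses upper semi-continuity together with convexity of $\zeta(x)$ to place the limit in $\zeta(x)$. If one wanted a self-contained argument, this limit step would be the delicate part. Given the earlier results, however, the lemma follows in a few lines as above.
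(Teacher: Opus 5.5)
Your proof is correct and is essentially the paper's argument. You apply \autoref{HS-thm-generalized} on $\Delta$, use \eqref{gnd-condition} to get $\langle \olsi p,\olsi z\rangle\le 0$, and conclude $\olsi z\in\mathds R^N_-$; testing against the unit vectors $e^i$ just spells out the paper's ``equivalently'' step, and your sketch of \autoref{HS-thm-generalized} (via \autoref{representation-lemma}, Carath\'eodory, and upper semi-continuity plus convexity of $\zeta(x)$) matches the paper's proof of that theorem.
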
  

\begin{remark} % \red{The fact that the price set considered is $\Delta$, eliminated the case where the price might be negative (see \autoref{ngtv-prc}).} Besides, 
	An equivalent statement of %Theorem 
	\autoref{GNDstrong} 
	%\ref{GNDstrong} 
	is obtained by replacing the strong condition \eqref{gnd-condition} by  the weak condition \eqref{gnd-condition2}  below
	\begin{equation} \label{gnd-condition2} 
		\forall  p \in \Delta,\; \exists z \in \zeta(p)  \text{ such that } \langle p, z \rangle  \le 0. 
	\end{equation} 
	It is clear that Condition \eqref{gnd-condition} implies Condition \eqref{gnd-condition2}. Conversely, assume that the correspondence $\zeta$ satisfies Condition \eqref{gnd-condition2}. We define the correspondence $\zeta^\prime: \Delta \to \mathds R^N$ by  $\zeta^\prime(p) = \{ z \in \zeta(p): \langle p, z \rangle  \le 0 \}$. It follows that $\zeta^\prime$ is non-empty, convex, compact valued and upper semi-continuous correspondence from $\Delta$ to $\mathds{R}^N$ such that 
	\begin{align*}%\label{gnd-condition}
		\forall p \in \Delta , \; \forall  z\in \zeta^\prime (p),  \;\; \langle p, z \rangle  \leq 0.
	\end{align*}
	From \autoref{GNDstrong}, there exits $\olsi {p} \in \Delta$ such that $\zeta^\prime(\olsi {p}) \cap\mathds{R}^N _- \neq \emptyset $. Since $\zeta^\prime(\olsi {p}) \subset \zeta(\olsi {p})$, it follows  $\zeta (\olsi  p )\cap\mathds R^N _- \neq \emptyset $. 
\end{remark}  
\noindent If the correspondence $\zeta$ is single-valued, \autoref{GNDstrong} is restated as follows:  
\begin{theoremp}{\ref{GNDstrong}$^\prime$} \label{key}
	Let $\Delta$ be the unit-simplex in $\mathds R^N$. Let $\zeta$ be a continuous mapping from $\Delta$ to $\mathds R^N$. Suppose $\zeta$ satisfies the following condition:
	\begin{align*}\label{gnd-condition-mapping}
		\forall p \in \Delta ,\; \langle p, \zeta(p) \rangle  \leq 0.
	\end{align*}
	Then, there exists $\olsi  p \in \Delta$ such that $\zeta (\olsi  p ) \in \mathds R^N _- $.
\end{theoremp}
We now turn our attention to a generalization of \autoref{GNDstrong},  which was established by Florenzano  \cite{Florenzano1982}. One of the contributions of \cite{Florenzano1982} is to provide the proof of a version of the GND lemma (\autoref{GNDstrong})  without using the Kakutani fixed-point argument. As a result, 
the Kakutani fixed-point theorem is implied as a direct consequence. She proves the GND lemma by contradiction,\footnote{See Lemma $1$ in \cite{Florenzano1982} on page $115$ or Lemma $2.1.1$ in \cite{Florenzano2003} on page $45$.} applying the strict separation theorem for two disjoint convex sets $-$ one of which is closed and the other compact. Additionally, she applies a partition of unity subordinated to a covering, together with the Brouwer fixed-point theorem. 

\begin{theorem}[Lemma $1$, \cite{Florenzano1982}] \label{Florenzano1982-thm}
	Let $P$ be a closed convex cone with vertex $0_N$ in $\mathds{R}^N$. Let  $\zeta$ be an upper semi-continuous and non-empty, compact convex valued correspondence from $\olsi  B \cap P$ to $\mathds{R}^N$. If $\zeta$ satisfies the following condition 
	\begin{equation}  \label{Florenzano1982-condition}
		\forall p \in S \cap P, \;\; \exists z \in \zeta(p) \text{ such that }  \langle p, z \rangle  \le 0,  
	\end{equation} 
	then there exists $\olsi {p} \in \olsi  B \cap P$ such that  $\zeta(\olsi {p}) \cap P^\circ \ne \emptyset$. 
\end{theorem}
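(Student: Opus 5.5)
Apply \autoref{HS-thm-generalized} on the compact convex set $C=\olsi B\cap P$ to a modified correspondence that has non-empty, compact, convex values and is upper semi-continuous. Condition \eqref{Florenzano1982-condition} only guarantees \emph{some} $z\in\zeta(p)$ with $\langle p,z\rangle\le 0$, and only on the sphere. So we would not restrict $\zeta$ in the spirit of $\zeta'$ in the remark after \autoref{GNDstrong}. We instead work with $\zeta$ itself. \autoref{HS-thm-generalized} gives $x\in\olsi B\cap P$ and $z\in\zeta(x)$ with
\begin{equation*}
\langle p,z\rangle\le\langle x,z\rangle\quad \forall p\in \olsi B\cap P .
\end{equation*}

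We then split into cases according to the position of $x$.

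\emph{Case $\|x\|<1$.} For any $q\in P$, small $t>0$ gives $x+tq\in \olsi B\cap P$, since $P$ is a convex cone and $\|x\|<1$. Plugging $p=x+tq$ yields $t\langle q,z\rangle\le 0$, hence $z\in P^\circ$, and $\olsi p=x$ works. This includes $x=0_N$, where the inequality reads $\langle p,z\rangle\le 0$ for all $p\in\olsi B\cap P$, and scaling gives $z\in P^\circ$.

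\emph{Case $\|x\|=1$.} Taking $p=0_N$ gives $\langle x,z\rangle\ge 0$. If $\langle x,z\rangle\le 0$, then $\langle p,z\rangle\le 0$ on $\olsi B\cap P$, so $z\in P^\circ$ by scaling, and we are done. The remaining danger is $\langle x,z\rangle>0$, and here the hypothesis must be used.

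The main obstacle is that the selected $z$ need not be the element promised by \eqref{Florenzano1982-condition}. To handle it, we would first replace $\zeta$ on the sphere by a correspondence that forces the good sign. Define $\tilde\zeta(p)=\{z\in\zeta(p):\langle p,z\rangle\le 0\}$ if $\|p\|=1$, and $\tilde\zeta(p)=\zeta(p)$ otherwise. This may fail to be upper semi-continuous at sphere points approached from inside, so we would use a continuous radial cutoff instead. Define
\begin{equation*}
\tilde\zeta(p)=\{z\in\zeta(p):\langle p,z\rangle\le \varphi(\|p\|)\},
\end{equation*}
where $\varphi\ge 0$ is continuous, $\varphi(1)=0$, and $\varphi$ is large for $\|p\|<1$ (e.g.\ a bound $M(1-\|p\|)/\epsilon$ with $M$ bounding $\|\zeta\|$ on the compact image). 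Non-emptiness of this $\tilde\zeta$ at interior points is not guaranteed near the sphere, however.

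If this construction does not go through, we would instead run a perturbation/limit argument. For $\epsilon\in(0,1)$, apply the theorem on $C_\epsilon=(1-\epsilon)\olsi B\cap P$ to $\zeta$. The interior case gives the conclusion directly. Otherwise we get $x_\epsilon$ with $\|x_\epsilon\|=1-\epsilon$ and $z_\epsilon\in\zeta(x_\epsilon)$ with $\langle p,z_\epsilon\rangle\le\langle x_\epsilon,z_\epsilon\rangle$ on $C_\epsilon$. Combined with the rescaled point $x_\epsilon/\|x_\epsilon\|\in S\cap P$, this still requires controlling the sign of $\langle x,z\rangle$.

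The cleanest fix we would try first is the standard Florenzano trick: extend $\zeta$ to $\olsi B\cap P$ through the retraction $p\mapsto p/\|p\|$ for $\|p\|\ge 1/2$. Set $\hat\zeta(p)=\mathrm{co}\bigl(\zeta(p)\cup\{\text{shift}\}\bigr)$, built so that on the sphere every element of $\hat\zeta(p)$ is a convex combination involving $-p$. This yields an u.s.c.\ convex compact-valued map satisfying the strong condition on $S\cap P$. After that, the case analysis above concludes, and we recover an element of $\zeta(\olsi p)\cap P^\circ$ by undoing the convex combination using $\langle p,-p\rangle<0$. Verifying this last recovery step is where we expect the real work to lie.
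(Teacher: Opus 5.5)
\textbf{There is a genuine gap: the existential hypothesis \eqref{Florenzano1982-condition} is never actually used.} Your case analysis after applying \autoref{HS-thm-generalized} to $\zeta$ is correct; it plays the role of \autoref{spprtng-lm}. Under the strong condition \eqref{Florenzano1982-condition2} it already proves the theorem for every closed convex cone. But none of your four fixes exploits \eqref{Florenzano1982-condition}, and the danger at $\|x\|=1$ is real. Take $N=1$, $P=\mathds R$, $\zeta(p)=\{p\}$ if $|p|<1$ and $\zeta(\pm 1)=[-1,1]$. Then the pair $(x,z)=(1,1)$ is an admissible output of \autoref{HS-thm-generalized}, yet $z\notin P^\circ=\{0\}$.

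Your fixes fail as follows. The sphere-only restriction is not u.s.c., and the cutoff can be empty (as you note). The $C_\varepsilon$ limit reproduces the same sign problem. The map $\mathrm{co}(\zeta(p)\cup\{\mathrm{shift}\})$ is never specified. As described, it does not single out the elements of $\zeta(p)$ with $\langle p,z\rangle\le 0$, so the hypothesis plays no role and no ``recovery'' can succeed: for $\zeta\equiv\{e\}$ with $e\in P\setminus\{0_N\}$ the conclusion is false. The recovery step you defer is the entire content of the weak-condition version.

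\textbf{What the paper does.} The key tool is the retraction $r$ of $\olsi B\cap P$ onto $S\cap P$ (\autoref{retract lemma}, available when $P\varsubsetneq span(P)$). The paper works under \eqref{Florenzano1982-condition2}. For such $P$ this is harmless because $\zeta\circ r$ only uses $\zeta$ on the closed set $S\cap P$. There you may replace $\zeta$ by $\zeta'(p)=\{z\in\zeta(p):\langle p,z\rangle\le 0\}$, which is non-empty by \eqref{Florenzano1982-condition}, convex, compact and u.s.c.\ on $S\cap P$. Then \autoref{HS-thm-generalized} for $\zeta'\circ r$ plus your case analysis gives $\olsi p=r(x)\in S\cap P$. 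When $P=span(P)$, the paper applies \autoref{HS-thm-generalized} to $\zeta$ directly, i.e.\ your route. There the passage to \eqref{Florenzano1982-condition2} is not automatic; the example above has $P=span(P)$.

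\textbf{How to complete your annulus idea.} It works and covers all cones at once, but it needs two ingredients your sketch lacks. First, use $\zeta'$, not $\zeta$, in the homotopy. Second, undo the combination with an equality rather than with $\hat w\in P^\circ$ alone.

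Set $\hat\zeta(p)=\zeta(2p)$ for $\|p\|\le\frac{1}{2}$. For $\frac{1}{2}<\|p\|\le 1$, set $\hat\zeta(p)=\{tz-(1-t)u: z\in\zeta'(u)\}$ with $u=p/\|p\|$ and $t=2-2\|p\|$. This map is u.s.c.\ with non-empty convex compact values. Let $(\hat x,\hat w)$ be the pair given by \autoref{HS-thm-generalized}.
If $\|\hat x\|=1$, then $\hat w=-\hat x$, and testing $p=0_N$ gives $0\le -1$, which is impossible.
If $\frac{1}{2}<\|\hat x\|<1$, testing $p=(1\pm s)\hat x$ gives $\langle \hat x,\hat w\rangle=0$. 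Hence $t\langle u,z\rangle=1-t>0$, contradicting $z\in\zeta'(u)$.
So $\|\hat x\|\le\frac{1}{2}$, and your interior argument gives $\hat w\in\zeta(2\hat x)\cap P^\circ$.

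Compared with the paper, this avoids the case split and \autoref{retract lemma}, at the price of losing the information $\olsi p\in S\cap P$.
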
 

\begin{remark} 
	Obviously, without loss of generality, we can replace condition \eqref{Florenzano1982-condition} by  condition \eqref{Florenzano1982-condition2} below
	\begin{equation} \label{Florenzano1982-condition2} 
		\forall p \in S \cap P, \;\; \forall z \in \zeta(p) \text{ such that } \langle p, z \rangle  \le 0.
	\end{equation} 
\end{remark} 

\begin{remark}
	On one hand, \autoref{Florenzano1982-thm} can be viewed as a generalized version of the GND lemma by allowing the convex cone $P$ to take various forms such as  the entire space $\mathds R^N$,  the positive orthant of $\mathds R^N$ or the closed half space. As a result, \autoref{Florenzano1982-thm} encompasses a range of important cases in the literature. 
	%there are various formulation of the GND established. 
	On the other hand, \autoref{Florenzano1982-thm} does not exclude the non-zero prices.  In \cite{FlorenzanoLeVan1986}, the following example is provided, demonstrating that, in general, the vector $\olsi  p$ in  \autoref{Florenzano1982-thm} might be $0_N$.
\end{remark}  
%	It is also worth noting that in general under the conditions of \autoref{Florenzano1982-thm}, it is possible that $\olsi  p$ might be $0_N$. This is made clear in the following example.
\begin{example} 
	Consider a cone $P=\mathds R^2$  and a single-valued correspondence $\zeta$ from $\olsi B$ into $\mathds R^2$ defined as follows: $\zeta(p)= -p$ for all $p \in \olsi B$. Obviously, all conditions of \autoref{Florenzano1982-thm} hold. Indeed, it is easy to see that  $\zeta(\olsi  p) \cap P^\circ \ne \emptyset$ if and only if $\olsi  p = 0_2$.
\end{example}	

\noindent In the case $\zeta$ is single-valued, \autoref{Florenzano1982-thm} is rewritten as follows: 
\begin{theoremp}{\ref{Florenzano1982-thm}$^\prime$} 
	\label{Florenzano1982-thm-mapping}
	Let $P$ be a closed convex cone with vertex $0_N$ in $\mathds{R}^N$. Let $\zeta$ be a continuous mapping from $\olsi  B \cap P$ to $\mathds{R}^N$. If $\zeta$ satisfies the condition 
	\begin{equation} 
		\label{Florenzano1982-condition-mapping} 
		\forall p \in S \cap P, \;\;  \langle p, \zeta(p) \rangle  \le 0,  
	\end{equation} 
	then there exists $\olsi {p} \in \olsi B \cap P$ such that  $\zeta(\olsi {p}) \in P^\circ$.
\end{theoremp} 

\subsection{Retract Mapping and Supporting Lemma}
\label{rtrct-mppng}
%If the cone is not a linear vector space, the concept of retract mapping allows us to archive our purpose. 
To prepare the proofs in the following sections, we recall the concept of a retract mapping. The existence of such a mapping is shown by constructing an explicit one in \autoref{retract lemma}. This concept is used in the proof of  \autoref{Florenzano1982-thm} in cases where the cone $P $ is not a linear subspace of $\mathds R^N$. Furthermore, \autoref{spprtng-lm}, a supporting lemma, is stated and proven. 

\begin{definition}[retract mapping]
	A subspace $A$ of a topological space $X$ is a \textbf{retract of $X$} \index{retraction!retract of a set} if there is a continuous mapping $f: X \mapsto A$ such that $f(y)=y$ for all $y \in A$. The mapping $f$ is called a\textbf{ retraction of $X$} \index{retraction!map} onto $A$.  
\end{definition}

\begin{lemma} \label{retract lemma} Let $P$ be a closed convex cone in $\mathds{R}^N$. If $P \varsubsetneq span(P)$, then there exists a continuous mapping $r$ from $\olsi B \cap P $ into $S \cap P$ such that $r(x)=x, \;\;\forall x \in S\cap P$. It means that $r$ is a retract of $\olsi B \cap P $ onto $S \cap P$. 
\end{lemma}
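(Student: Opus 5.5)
The plan is to exploit the fact that $P \varsubsetneq span(P)$ forces $P$ to be a cone that is not symmetric. Such a cone has a direction along which every nonzero element of $P$ projects strictly positively, or at least the origin can be pushed off along a fixed vector lying in $P$. I will pick a point in the relative interior of $P$ and radially project from a suitable point outside. It is convenient to work inside $L = span(P)$, where $P$ has nonempty interior. Concretely, choose $e \in \text{ri}(P)$ with $\|e\| = 1$.

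The first step is to set up the push-off. Since $P \ne L$ and $P$ is a closed convex cone with nonempty interior in $L$, the cone $-P$ is not contained in $P$. In fact $-e \notin P$: otherwise $P$ would contain the line $\mathds{R}e$ through an interior point, making $P + \mathds{R}e = P$ with $e$ interior. This gives $P \supset$ a neighbourhood of $0$ in $L$, so $P = L$, a contradiction. It follows that the half-line $\{-te : t > 0\}$ misses $P$.

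The second step defines the map. For $x \in \olsi B \cap P$, consider the ray starting at the point $c = -e$ (outside $P$) through $x$, namely $\{x + t(x - c) : t \ge 0\}$. Note that $x - c = x + e \in \text{ri}(P)$ whenever $x \in P$, since $e \in \text{ri}(P)$ and $P$ is a convex cone. Hence $x + t(x + e) \in P$ for all $t \ge 0$. Moreover $x + e \ne 0$, because $-e \notin P$. So the ray stays in $P$, leaves the ball, and its norm $\|x + t(x+e)\|$ is a convex function of $t$ tending to infinity. I would set $r(x) = x + t(x)(x+e)$, where $t(x) \ge 0$ is the unique $t$ with $\|x + t(x+e)\| = 1$. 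Then $r(x) \in S \cap P$, and $t(x) = 0$ exactly when $\|x\| = 1$, so $r$ fixes $S \cap P$.

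The main obstacle is uniqueness and continuity of $t(x)$. Solving the quadratic $\|x + t(x+e)\|^2 = 1$ gives
\[
t(x) = \frac{-\langle x, x+e\rangle + \sqrt{\langle x, x+e\rangle^2 + \|x+e\|^2(1 - \|x\|^2)}}{\|x+e\|^2},
\]
the nonnegative root. This is well defined because $1 - \|x\|^2 \ge 0$, and it is unique: the product of the roots is $-(1-\|x\|^2)/\|x+e\|^2 \le 0$, so there is at most one positive root. For continuity it suffices that $\|x+e\|$ is bounded away from $0$ on the compact set $\olsi B \cap P$. This holds since $x + e$ never vanishes there, as shown in the first step, and the map is continuous on a compact set. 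The formula is then a composition of continuous functions.

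This yields the required retraction. One caveat remains: if $P$ were a linear subspace, no $e$ with $-e \notin P$ would exist. The hypothesis $P \varsubsetneq span(P)$ is exactly what excludes this case.
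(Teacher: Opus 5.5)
Your map is the paper's retraction $r(x)=x+\lambda_a(x)(x-a)$ with the base point $a=-e$, and your root formula is the same as the paper's. Most of the construction is sound: $x+t(x+e)\in P$, $x+e\neq 0$ because $-e\notin P$, and $\|x+e\|$ is bounded below on the compact set $\olsi B\cap P$. There is, however, one step you assert without a valid argument: that $t(x)=0$ for $x\in S\cap P$. For $\|x\|=1$ the quadratic has roots $0$ and $-2\langle x,x+e\rangle/\|x+e\|^2$. Your observation that the product of the roots is $\le 0$ only shows that at most one root is positive. Since $0$ is itself a root here, this does not exclude a second, positive root. If $\langle x,x+e\rangle<0$, your formula (the root with the plus sign) would return that positive root, and then $r(x)\neq x$. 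Convexity of $t\mapsto\|x+t(x+e)\|$ does not help either, since it may dip below $1$ and come back. This is exactly where the paper spends its Claim: it chooses $a\in P^\circ\cap(-P)\cap P^{\mathsf c}$ so that $\langle x,x-a\rangle=\|x\|^2-\langle x,a\rangle\ge 0$ on $P$.

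In your setting the fix is one line, using the normalization $\|e\|=1$ that you imposed but never invoked. For $x\in S\cap P$, Cauchy--Schwarz gives $\langle x,x+e\rangle=1+\langle x,e\rangle\ge 1-\|x\|\,\|e\|=0$, hence $t(x)=0$ and $r(x)=x$. With this added your proof is complete, and slightly leaner than the paper's: Cauchy--Schwarz replaces the projection argument that produces a vector in $P^\circ$. The relative interior is also not needed at all. You only use $e\in P$, $-e\notin P$ and $\|e\|=1$, and any unit vector of $P\setminus(-P)$ would serve. Such a vector exists because a convex cone with $P=-P$ is a subspace, so your first step can be shortened accordingly.
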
 
\noindent See \nameref{prove-retract-lm} in \autoref{proofs-sec}.% on \autopageref{prove-retract-lm}. 
%	\begin{remark} \autoref{retract lemma}  \sout{ means that if the cone $P$ is not a vector space, then the set $S\cap P$ is a retract of $\olsi  B \cap P$.}
	%	\end{remark}
\begin{remark} \autoref{retract lemma}  means that if the cone $P$ is not a vector space, then the set $S\cap P$ is a retract of $\olsi  B \cap P$.
\end{remark}

\begin{lemma} \label{spprtng-lm} Let $P$ be a closed convex cone with vertex $0_N$ in $\mathds R^N$. Let $\zeta$ be a correspondence from $\olsi  B \cap P$ into $\mathds R^N$ satisfying condition: 
	\begin{equation} \label{eq-in-sphere}
		\forall p \in S \cap P, \;\; \forall z \in \zeta(p) \;\; \langle p, z \rangle \le 0. 
	\end{equation}
	If there are some $x \in \olsi  B \cap P$ and $z \in \zeta(x)$ such that
	\begin{equation} \label{HS-ineq}
		\langle  p,z \rangle  \leq \langle x,z \rangle \;\; \forall p \in \olsi  B \cap P, 
	\end{equation}
	then $z \in P^\circ \cap \zeta(x)$.  
\end{lemma}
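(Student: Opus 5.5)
We argue directly from the variational inequality \eqref{HS-ineq}. The key observation is that $\langle x,z\rangle\le 0$ in every case. We then use the cone structure of $P$ to upgrade \eqref{HS-ineq} into $\langle p,z\rangle\le 0$ for all $p\in P$, which is exactly $z\in P^\circ$.

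\textbf{Step 1: $\langle x,z\rangle\le 0$.} There are two cases.
\begin{itemize}[label=$\bullet$]
\item If $\|x\|=1$, then $x\in S\cap P$, so condition \eqref{eq-in-sphere} applied to $z\in\zeta(x)$ gives $\langle x,z\rangle\le 0$.
\item If $x=0_N$, then $\langle x,z\rangle=0$.
\item If $0<\|x\|<1$, we take $p=\lambda x$ in \eqref{HS-ineq}. This is admissible for every $\lambda\in[0,1/\|x\|]$, since $P$ is a cone with vertex $0_N$ (the case $\lambda=0$ uses closedness of $P$, or we simply take small $\lambda>0$). We get $(\lambda-1)\langle x,z\rangle\le 0$ on this interval. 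Choosing some $\lambda>1$, which is possible because $1/\|x\|>1$, yields $\langle x,z\rangle\le 0$. (Choosing $\lambda<1$ would give the reverse inequality, so in fact $\langle x,z\rangle=0$ here, though we only need the sign.)
\end{itemize}

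\textbf{Step 2: extend to all of $P$.} Fix $p\in P$ with $p\neq 0_N$ (the case $p=0_N$ is trivial). The vector $p/\|p\|$ lies in $\olsi B\cap P$ because $P$ is a cone. By \eqref{HS-ineq} and Step 1,
$$\Big\langle \tfrac{p}{\|p\|},z\Big\rangle\le\langle x,z\rangle\le 0 .$$
Multiplying by $\|p\|>0$ gives $\langle p,z\rangle\le 0$. Hence $z\in P^\circ$, and since $z\in\zeta(x)$ by hypothesis, $z\in P^\circ\cap\zeta(x)$.

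\textbf{Main obstacle.} The only delicate point is the interior case of Step 1. There, \eqref{eq-in-sphere} is unavailable, and the sign of $\langle x,z\rangle$ must instead be extracted by scaling $x$ radially outward inside $\olsi B\cap P$. Convexity of $P$ is not needed in this argument; the cone property suffices.
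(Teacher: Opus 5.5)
Your proof is correct and follows the paper's argument: it uses the same case split to get $\langle x,z\rangle\le 0$, where the paper picks exactly your radial scaling with $\lambda=1/\|x\|$, i.e.\ $p=x/\|x\|$. It then uses the same normalization $p\mapsto p/\|p\|$ to pass from $\olsi B\cap P$ to all of $P$; your extra remark that taking $\lambda<1$ forces $\langle x,z\rangle=0$ in the case $0<\|x\|<1$ is correct but not needed.
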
  
\noindent See the \nameref{prove-supporting-lm} in 
\autoref{proofs-sec}.
%\hyperref[proofs-sec]{Proofs Section} 
%\ref{prove-supporting-lm} 
%on \autopageref{prove-supporting-lm}. 	  

When the correspondence $\zeta$ is single-valued, \autoref{spprtng-lm} can be  restated as follows: 
\begin{lemmap}{\ref{spprtng-lm}$^\prime$} \label{spprtng-lm2}
	Let $P$ be a closed convex cone with vertex $0_N$ in $\mathds R^N$. Let $\zeta$ be a mapping from $\olsi  B \cap P$ into $\mathds R^N$ satisfying condition: 
	\begin{equation} \label{eq-in-sphere2}
		\forall p \in S \cap P, \;\; \langle p, \zeta(p) \rangle \le 0. 
	\end{equation}
	If there is some $x \in \olsi  B \cap P$ such that
	\begin{equation} \label{HS-ineq2}
		\langle  p,\zeta(x) \rangle  \leq \langle x,\zeta(x) \rangle \;\; \forall p \in \olsi  B \cap P, 
	\end{equation}
	then $\zeta(x) \in P^\circ$.  
\end{lemmap}

\begin{remark} It is worth noting that the conclusions of \autoref{spprtng-lm} and the generalized GND lemma (\autoref{Florenzano1982-thm}) are the same. Nevertheless, the assumption in \autoref{spprtng-lm} is weaker than the one in \autoref{Florenzano1982-thm}. Indeed, first, by \autoref{HS-thm-generalized}, Condition \eqref{HS-ineq2} is a direct consequence of the upper semi-continuity of the correspondence $\zeta$.   	Second, the correspondence $\zeta$ in \autoref{spprtng-lm} is not required to be upper semi-continuous. We demonstrate this second point with the following example of a single-valued correspondence.

	% Conditions \eqref{eq-in-sphere} and \eqref{HS-ineq} hold with $x = 0$ and $z = 0$, and $0 \in P^\circ \cap \zeta(0)$. 
	
	%Normally, the class of correspondence for which conditions \eqref{eq-in-sphere} and \eqref{HS-ineq} hold is upper semi-continuous (TO PROVIDE REFERENCE). However, it is not the case in \autoref{spprtng-lm} (TO REWRITE THIS SENTENCE).
	%\red{We should show an example in which the correspondence $\zeta$ satisfying the \autoref{spprtng-lm} is not upper semi-continuous.}
\end{remark} 

\begin{example}[Example in $2-$ dimensional space]
	Consider the $2-$ dimensional cone  $P= \mathds{R}^2_+$, the closed-unit ball $\olsi B_2 = \{p=(p_1,p_2) \in \mathds R^2\mid p^2_1 + p^2_2 \le 1\}$, the unit sphere $S_2 = \{p=(p_1,p_2) \in \mathds R^2\mid p^2_1 + p^2_2 = 1\}$ and the single-valued correspondence $\zeta$ defined below 
	\begin{equation}
		\zeta(p) = 
		\begin{cases}
			-p & \text{ if } p \in P \cap \olsi B_2 \setminus (0,0),\\ 
			(-1,-1) & \text{ if } p = (0,0).  
		\end{cases} 
	\end{equation} 
	It is easy to verify that the correspondence $\zeta$ is not upper semi-continuous at $p=(0,0)$. Furthermore, Condition \eqref{eq-in-sphere2} holds, and Condition \eqref{HS-ineq2} is satisfied with $p=(0,0)$. 
	
\end{example}

%	\begin{conjecture}
	%		\label{GND-not-contintuity}
	%		Let $P$ be a closed convex cone with vertex $0_N$ in $\mathds{R}^N$. Let $\zeta$ be a non-empty, convex and compact valued correspondence from $\olsi  B \cap P$ into $\mathds{R}^N$ satisfying the condition 
	%		\begin{equation*}
		%		\forall p \in S \cap P, \;\; \forall z \in \zeta(p) \text{ such that } \langle p, z\rangle \le 0. 
		%		\end{equation*}
	%		If there exists some $x \in \olsi  B \cap P$ and $z \in \olsi  B \cap P$ such that
	%		\begin{equation*}
		%		\langle p, z \rangle \le \langle x,z \rangle \;\; \forall p \in \olsi  B \cap P,
		%		\end{equation*}
	%		then $z \in P^\circ \cap \zeta(x)$. 
	%	\end{conjecture}

%\begin{conjecture}
%	Let $P$ be a closed convex cone with vertex $0_N$ in 
%   $\mathds{R}^N$. Let $\zeta$ be a non-empty, convex and     compact valued correspondence from $\olsi  B \cap P$      into $\mathds{R}^N$ such that 
%	\begin{equation}
	%		\forall p \in S \cap P, \;\; \forall z \in \zeta(p)        \text{ such that } \langle p, z\rangle \le 0. 
	%	\end{equation} 
%  If there exists some $x \in \olsi  B \cap P$ such that    $\zeta(x) \cap P^\circ \ne \emptyset$, then there exist    $\olsi  x \in \olsi  B \cap P$ and $\olsi  z \in  \zeta(\olsi  x)$ such that 
%\begin{equation} \label{HS-ineq2}
%	\langle p, \olsi  z \rangle \le \langle \olsi  x, \olsi      z \rangle \;\; \forall p \in \olsi  B \cap P.   
%\end{equation}
%\end{conjecture}	 
%\begin{remark} Condition \eqref{HS-ineq2} holds for any 
%    compact-valued correspondence $\zeta$. Hence the upper      semi-continuity could be got rid of in the case. 
%\end{remark}

\subsection{Proof of \autoref{GNDstrong} (Gale--Nikaid{\fontencoding{T5}\selectfont \ocircumflex}--Debreu Lemma)} \label{prv-GND}
%In Section \ref{prv-GND}, we present the proof of \autoref{GNDstrong}. 
%	we split the proof into $2$ cases associating with the correspondence $\zeta$ being either single- or multi-valued. We put the proof of the case of the correspondence in the Appendix.  Below is the proof for the 
%	Section \ref{single-value-Thm2} carries out single-valued correspondence. 
%The proof for multi-valued correspondence  is shown below. 
We apply the generalized Hartman--Stampacchia theorem (\autoref{HS-thm-generalized}).\footnote{To provide intuition, we present the proof for the single-valued correspondence and include it in Appendix (Section \ref{single-value-Thm2}).} 
%To make the proof easy to read, each section below deals with  separate cases of the correspondence: it is either a single-valued or multi-valued mapping. %The proof of \autoref{GNDstrong} therefore consists of two independent parts. 
\iffalse 
\subsubsection{The Correspondence \texorpdfstring{$\zeta$}{\textzeta} is Single-valued }%\label{single-value-Thm2}
\begin{proof}\footnote{For the sake of providing intuition, we provide the proof for this case. 
		Actually, the proof for this case could be viewed as to be included in the case of correspondence.}
	In this case, we need to seek $\olsi  p \in \Delta $ such that $\zeta(\olsi  p) \in \mathds R^N_-$. Indeed, applying  \nameref{HS-thm} to the mapping $\zeta$ on $\Delta$, we obtain some $\olsi  p \in \Delta$ such that
	\begin{equation*}
		\langle p, \zeta(\olsi  p)\rangle \leq  
		\langle \olsi  p,\zeta(\olsi  p) \rangle \;\; \forall p \in \Delta.
	\end{equation*} Since the hypothesis on $\zeta$ of  \autoref{GNDstrong} (or condition  \eqref{gnd-condition}) implies $ \langle \olsi  p, \zeta(\olsi  p) \rangle \leq 0$, we see that 
	\begin{equation*}
		\langle  p, \zeta (\olsi  p) \rangle \leq 0 \;\; \forall p \in \Delta.
	\end{equation*} 
	It is obvious that this implies $\zeta(\olsi  p) \in  \mathds R^N_-. $ 
\end{proof}   
\fi 
%\subsubsection{The Correspondence \texorpdfstring{$\zeta$}{\textzeta} is Multi-valued
	%Upper Semi-continuous with Non-empty, Compact, Convex Values}\label{mlt-vl-thm2}
%{\it Proof}
%\begin{proof}  	 
Since $\zeta$ is upper semi-continuous, by applying  \autoref{HS-thm-generalized} with $C$ replaced by $\Delta$, there exist  $x\in \Delta$ and $z \in \zeta(x)$ such that
	\begin{equation*}
		\langle p, z \rangle \le \langle x,z \rangle \;\; \forall p \in \Delta.
	\end{equation*}
	Since $z \in \zeta (x)$, Condition \eqref{gnd-condition} implies that $\langle x,z \rangle \leq 0.$ Therefore,
	\begin{equation*}
		\langle p,z \rangle \leq 0 \text{ } \forall p \in \Delta. 
	\end{equation*}
	Equivalently, $z \in \mathds{R}^N_{-}$. We have proved that $z \in \zeta (x) \cap\mathds R^N _- .$
%\end{proof} \qed 

\subsection{Proof of \autoref{Florenzano1982-thm} (Generalized GND Lemma)} \label{Flo82-prf} 
Section \ref{Flo82-prf} is dedicated to proving \autoref{Florenzano1982-thm} with Condition \eqref{Florenzano1982-condition} replaced by Condition \eqref{Florenzano1982-condition2}. One of the main aims of this paper is to provide alternative and direct proofs not only for the original 
\hyperref[GNDstrong]{GND lemma} (\autoref{GNDstrong})
%\nameref{GNDstrong} 
but also for its generalized version (\autoref{Florenzano1982-thm}). As a result of this extension, the proof of \autoref{Florenzano1982-thm} comes at a cost, i.e., requiring more mathematical tools, compared with that of \autoref{GNDstrong}. Nevertheless, a concise proof of the theorem is obtained.

As a single-valued correspondence is a special case of multi-valued one, we only present the proof for the latter. For the sake of completeness, we include the proof for the former case in Appendix (Section \ref{Florenzano(82)-mapping-proof1}). Additionally, Section \ref{Florenzano(82)-mapping-proof} provides the alternative proof based solely on the geometric property of the normal cone. 	
When the cone $P$ is not a linear subspace of $\mathds R^N$, in cases where the correspondence is either multi-valued or single-valued, the proofs are based on the concept of retract mapping together with \autoref{HS-thm} and \autoref{HS-thm-generalized}, respectively. The existence results follow directly from \autoref{spprtng-lm} and \hyperref[spprtng-lm2]{Lemma} \autoref{spprtng-lm2}, respectively. When the cone $P$ is a subspace of $\mathds R^N$, in both cases, the existence results are derived from \autoref{HS-thm} and \autoref{HS-thm-generalized},  respectively.  

{\it{Proof of \autoref{Florenzano1982-thm}}} We consider $2$ cases. 
\begin{case}{\boldmath \bf{$ P \varsubsetneq span(P)$}} \label{Florenzano(1982)-correspondence-case1} 	
%	\newline 
	%\noindent The idea of the proof is .....\\
	By \autoref{retract lemma}%on page \pageref{retract lemma}
    , there is some retract mapping $r$ from $\olsi  B \cap P$ into $S\cap P$. Since  $r$ is continuous and $\zeta$ is upper semi-continuous according to the assumptions in \autoref{Florenzano1982-thm}, it follows that $\zeta \circ r$ is upper semi-continuous.  Obviously, $\zeta \circ r$ is also non-empty, convex, compact  valued.
	
	\noindent We are now applying  \autoref{HS-thm-generalized} with $C$ replaced by $\olsi  B \cap P$, $\zeta$ by $\zeta \circ r$, and thus obtain $x\in \olsi  B \cap P$ and $z \in \zeta \circ r(x)$ such that 
	\begin{equation}  \label{HS-ineq1}
		\langle  p,z \rangle \leq \langle x,z \rangle \;\; \forall p \in \olsi  B \cap P.  
	\end{equation}
	We verify that Conditions \eqref{eq-in-sphere} and \eqref{HS-ineq} in \autoref{spprtng-lm} hold with $\zeta$ replaced by $\zeta \circ r$. On one hand, it is easy to see that Condition \eqref{HS-ineq1} leads to Condition \eqref{HS-ineq} (holding  with $\zeta$ replaced by $\zeta \circ r$). On the other hand, noting that $\zeta(x^1) = \zeta \circ r (x^1)$ for all $x^1 \in S\cap P$, from the assumption on $\zeta$ of \autoref{Florenzano1982-thm} (or Condition \eqref{Florenzano1982-condition2}), we see that Condition \eqref{eq-in-sphere} of \autoref{spprtng-lm} holds. As a result of \autoref{spprtng-lm}, we obtain $z \in P^\circ \cap \zeta \circ r(x)$.  
\end{case}

\begin{case}{\boldmath \bf{ $P  = span(P)$}} \label{Florenzano(1982)-correspondence-case2}
%	\newline 
	On one hand, according to  \autoref{HS-thm-generalized} with $C$ replaced by $\olsi  B \cap P$, we obtain that there are some $x \in \olsi  B \cap P$ and $z \in \zeta(x)$ such that 
	\begin{equation*}
		\langle p, z \rangle  \le \langle x, z \rangle \;\; \forall p \in \olsi  B \cap P. 
	\end{equation*} 
	On the other hand, by the assumption on $\zeta$ of \autoref{Florenzano1982-thm}, it follows that 
	\begin{equation*}
		\forall p \in S \cap P, \;\; \forall z \in \zeta(p), \;\; \langle p, z \rangle \leq 0. 
	\end{equation*}
	\autoref{spprtng-lm} implies that $z \in P^\circ \cap \zeta(x)$. This concludes \autoref{Florenzano1982-thm}.  %\qed 
	%We repeat the procedure of Case \ref{Florenzano(1982)-correspondence-case1} on page \pageref{Florenzano(1982)-correspondence-case1} in Section \ref{Flo82-prf} with $\zeta \circ r$ replaced by $\zeta$. Thus, we get 
	%\begin{equation*}
	%z = \sum_{i=1}^{N+1}\beta_i z^i,
	%\end{equation*}
	%\begin{equation*}
	%\sum_{i=1}^{N+1}\beta_i =1,
	%\end{equation*} 
	%\begin{equation*}
	%z^i \in\zeta(x) \;\; \forall i =1, \dots, N+1. 
	%\end{equation*}
	%and 
	%\begin{equation*}
	%z \cdot p \le z \cdot x \;\; \forall p \in \olsi  B \cap P. 
	%\end{equation*}
	%We consider the following $2$ subcases: $||x||=0$ or $||x||=1$, and $0 < ||x||<1$. We now apply the similar argument as in Case \ref{Florenzano(1982)-mapping-case2} on page \pageref{Florenzano(1982)-mapping-case2} of Section \ref{Florenzano(82)-mapping-proof} with $\zeta(\olsi  p)$ replaced by $z$, $\olsi  p$ by $x$. We thus get $z\in \zeta(x)\cap P^0$. This concludes \autoref{Florenzano1982-thm}
\end{case}
\qed

\iffalse 
\begin{remark} When the correspondence $\zeta$ is either single-valued or multi-valued, there is another approach to \autoref{Florenzano1982-thm}. To demonstrate the method, we focus on the multi-valued correspondence. Instead of using only one retract mapping as in \autoref{Florenzano(1982)-correspondence-case1} 
%Case  \ref{Florenzano(1982)-correspondence-case1} 
on page \pageref{Florenzano(1982)-correspondence-case1}, a family of retract mapping $(r_a)_a$ is used. See details of proof in Section  \ref{Apendix-proof} on page \pageref{Apendix-proof}.  
\end{remark}
\fi 

\iffalse 
\begin{remark}\label{remark0} 
As the result of the direct proof  of \autoref{Florenzano1982-thm} given , we have the result that, when $P$ is not a linear subspace, there exists $\olsi  p \in S \cap P$ such that $\zeta (\olsi  p) \cap P^0 \ne \emptyset$, while in the statement of \autoref{Florenzano1982-thm}, we have only $\{\exists \olsi  p \in B\cap P, \zeta (\olsi  p)\cap P^0 \neq \emptyset\}$ (a priori $\olsi  p$ may be zero). However, %Florenzano and Le Van 
\cite{FlorenzanoLeVan1986} exhibit a corollary of \autoref{Florenzano1982-thm} which ensures $\olsi  p \neq 0$.
\end{remark}
\fi

\begin{remark}\label{remark0} 
In the statement of \autoref{Florenzano1982-thm}, it is possible that $\olsi  p$ equals $0_N$. It is perhaps worth mentioning that in our direct proof of \autoref{Florenzano1982-thm}  when the cone $P$ is not a linear subspace of $\mathds R^N$, we conclude that $\olsi  p$ is different from $0_N$. This result is not demonstrated in \cite{Florenzano1982}; it is also obtained in Corollary $3$ of \cite{FlorenzanoLeVan1986} using a different approach.   
\end{remark}

%\begin{remark} We are able to imply the Kakutani fixed-point theorem from GND lemma. \end{remark}
\section{Equivalence Relations} \label{Equivalnce-relation}
\subsection{The Hartman--Stampacchia and Brouwer Fixed-Point Theorems} \label{HS-Brouwer}
In \hyperref[HS-Brouwer]{Section \ref{HS-Brouwer}}, for completeness, we recall the implication demonstrating that the Brouwer fixed-point theorem follows directly from the Hartman--Stampacchia theorem. 
%Although the proof for  \autoref{Brouwer-thm}  could be found in ....., we include it for the sake of completeness. 
\begin{proposition}[Brouwer fixed-point theorem]\label{Brouwer-thm} Let $C$ be a non-empty, convex, compact set in $\mathds R^N$. Let $f$ be a continuous mapping from $C$ into itself. Then, there exists a fixed point of $f$.
\end{proposition}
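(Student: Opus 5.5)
We derive the statement directly from the Hartman--Stampacchia theorem (\autoref{HS-thm}), applied not to $f$ but to the displacement mapping
\begin{equation*}
g(u) = f(u) - u, \qquad u \in C.
\end{equation*}
Since $f$ is continuous and $C$ is compact and convex, $g$ is a continuous mapping from $C$ to $\mathds R^N$. The hypotheses of \autoref{HS-thm} therefore hold with $K = C$.

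\autoref{HS-thm} yields some $\olsi u \in C$ such that
\begin{equation*}
\langle v, f(\olsi u) - \olsi u \rangle \le \langle \olsi u, f(\olsi u) - \olsi u \rangle \quad \forall v \in C,
\end{equation*}
or equivalently
\begin{equation*}
\langle v - \olsi u, f(\olsi u) - \olsi u \rangle \le 0 \quad \forall v \in C.
\end{equation*}

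The key step is to choose the test point $v = f(\olsi u)$. This choice is admissible precisely because $f$ maps $C$ into itself. Substituting it gives
\begin{equation*}
\| f(\olsi u) - \olsi u \|^2 \le 0,
\end{equation*}
hence $f(\olsi u) = \olsi u$, and $\olsi u$ is a fixed point of $f$.

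No step here is a serious obstacle. The only point needing care is that the inequality from \autoref{HS-thm} must be tested at a point of $C$, which is exactly where the self-map hypothesis enters. In the language of the normal cone, $f(\olsi u) - \olsi u \in N_C(\olsi u)$, and the only vector of the form $w - \olsi u$ with $w \in C$ that lies in this cone is the zero vector.
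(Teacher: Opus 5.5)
Your proof is correct and follows essentially the same route as the paper's. The paper also applies \autoref{HS-thm} to $g(x)=f(x)-x$ and then tests the resulting inequality at $p=f(\olsi x)\in C$, which gives $\langle f(\olsi x)-\olsi x, f(\olsi x)-\olsi x\rangle\le 0$ and hence $f(\olsi x)=\olsi x$.
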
 
{\it Proof}
%\begin{proof}
%Define the mapping $H$ by $H(x)= \{y\in\mathds R ^N: y_i = \max (x_i, 0)\}$. We have $H(x) \geq 0, \forall x$ and $H(x)= 0 \Leftrightarrow x_i \leq 0, \forall i$. Obviously $H$ is continuous from $\Delta$ into $\mathds R^N$.\\
Define $g(x)= f(x)-x$. Applying 
\autoref{HS-thm} 
to the mapping $g$, we obtain some $\olsi  x \in C$ such that
$$\langle p, g(\olsi  x) \rangle \le \langle \olsi  x, g(\olsi  x) \rangle \;\; \forall p \in C.$$
%	$$\langle g(\olsi  x), p-\olsi  x \rangle \leq 0,\;\; \forall p \in C.$$
We claim that $\olsi  x$ is a fixed point of $f$. Indeed, take $p= f(\olsi  x) \in C$. Then
$$\langle f(\olsi  x)-\olsi  x,  f(\olsi  x)-\olsi  x \rangle  \leq 0.$$
In other words, $f(\olsi  x)= \olsi  x.$
%\end{proof}
\qed 
\begin{remark} We can use the Brouwer fixed-point theorem to prove the Hartman--Stampacchia theorem \cite{KinderlehrerStampacchia2000}. Indeed, let $g(x)= \pi _C (x+\zeta(x))$ for any $x \in C$, where $\pi_C$ denotes the convex projection of $\mathds R^N$ onto $C$. The mapping $g$ is continuous from $C$ into $C$. From \autoref{Brouwer-thm}, there is a fixed-point of $g$, i.e.,  $\olsi  x = g(\olsi  x)$ or equivalently $\olsi  x = \pi _C (\olsi  x + \zeta(\olsi  x))$. In this case, $\zeta(\olsi  x)= \olsi  x + \zeta(\olsi  x)-\olsi  x$ and this expression belongs the normal cone of $C$ at $\olsi  x$. We get Inequality (\ref{HS inequality}) of the Hartman--Stampacchia theorem.
\end{remark}

\subsection%[Fixed Point Theorems]
{The Hartman--Stampacchia and Kakutani Fixed-Point  Theorems}\label{prove-Brouwer-Kakutani} 
In \hyperref[prove-Brouwer-Kakutani]{Section \ref{prove-Brouwer-Kakutani}}, we present a proof of the Kakutani fixed-point theorem using the  \hyperref[HS-thm]{Hartman--Stampacchia theorem},  
as stated in \autoref{Kakutani-thm}.

\begin{proposition}[Kakutani fixed-point theorem]\label{Kakutani-thm}
Let $C$ be a non-empty, convex, compact, subset of $\mathds R^N$. Let $\zeta$ be a non-empty, convex, compact,  valued correspondence from $C$ into itself. If $\zeta$ is an upper semi-continuous, then there exists a fixed point of the correspondence $\zeta$. That is, there exists some $x \in C$ such that $x \in \zeta(x)$. 
\end{proposition}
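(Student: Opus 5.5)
The plan is to follow the proof of the Brouwer fixed-point theorem (\autoref{Brouwer-thm}), but for a correspondence. I would apply \autoref{HS-thm-generalized} to the correspondence $\Phi(x)=\zeta(x)-x=\{y-x \mid y\in\zeta(x)\}$ on $C$.

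First I check that $\Phi$ satisfies the hypotheses of \autoref{HS-thm-generalized}. Since $\zeta(x)$ is non-empty, convex and compact, so is its translate $\zeta(x)-x$. For upper semi-continuity, note that $C$ is compact and $\zeta$ is u.s.c.\ with closed values into the compact set $C$, so the graph of $\zeta$ is closed. The map $(x,y)\mapsto(x,y-x)$ is a homeomorphism of $C\times\mathds R^N$, and it sends the graph of $\zeta$ onto the graph of $\Phi$. Hence the graph of $\Phi$ is closed. Moreover $\Phi$ takes values in the compact set $C-C$, so by the remark in the preliminaries (a closed correspondence into a compact space is u.s.c.), $\Phi$ is upper semi-continuous. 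Alternatively, one can argue directly with sequences: if $x_n\to x$ and $y_n-x_n\in\Phi(x_n)$, extract a convergent subsequence $y_n\to y$; then $y\in\zeta(x)$ by closedness of the graph.

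Next, \autoref{HS-thm-generalized} gives $x\in C$ and $z=y-x$ with $y\in\zeta(x)$ such that $\langle p,y-x\rangle\le\langle x,y-x\rangle$ for all $p\in C$. Since $\zeta(x)\subset C$, I may take $p=y$. This gives $\langle y-x,y-x\rangle\le 0$, so $y=x$, and therefore $x\in\zeta(x)$.

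The only delicate step is verifying upper semi-continuity of $\Phi$. This is where compactness of $C$ and the fact that $\zeta$ maps into $C$ are used, so that the closed-graph characterization applies. The remaining steps mirror the single-valued Brouwer argument verbatim.
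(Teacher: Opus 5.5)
Your proof is correct, but it takes a different route from the paper.

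\textbf{Your route.} You apply the correspondence version, \autoref{HS-thm-generalized}, as a black box to $\Phi=\zeta-\mathrm{id}$. The only real work is checking that $\Phi$ is upper semi-continuous with non-empty, convex, compact values. After that, the Brouwer argument (take $p=y\in\zeta(x)\subset C$) goes through verbatim.

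\textbf{The paper's route.} The paper instead derives Kakutani directly from the single-valued \autoref{HS-thm}. It uses \autoref{representaion-extra} to build continuous maps $f^k:C\to C$ whose values are convex combinations of at most $N+1$ points of $\zeta\big(B(x^k,\varepsilon_k)\big)$. It then obtains fixed points $x^k=f^k(x^k)$ exactly as in the proof of \autoref{Brouwer-thm}. Finally, it passes to the limit using compactness, upper semi-continuity of $\zeta$ and convexity of $\zeta(x)$ to get $x\in\zeta(x)$. In effect, the paper re-runs the approximation argument from the proof of \autoref{HS-thm-generalized}, whereas you reuse its conclusion.

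\textbf{What each buys.} Your route is shorter and more modular. It shows that Kakutani stands to the generalized theorem exactly as Brouwer stands to the original one. It also never needs the approximating maps to send $C$ into itself; the hypothesis $\zeta(C)\subset C$ is essential in your argument only for the choice $p=y$. The paper's route establishes the arrow from the original Hartman--Stampacchia theorem to Kakutani without passing through the generalized version. Because it uses the single-valued theorem only to produce fixed points of the $f^k$, the same argument works with Brouwer in place of Hartman--Stampacchia, which is the point of the paper's closing remark in that section.
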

{\it Proof of \autoref{Kakutani-thm} using the Hartman-Stampacchia theorem}. 
%\begin{proof}[Proof of \autoref{Kakutani-thm} using Hartman-Stampacchia theorem] 
%$\quad \quad \quad \quad \quad \quad \quad \quad \quad \quad \quad \quad \quad \quad \quad \quad \quad \quad$ 
Let $(\varepsilon_k)$ be a decreasing non-negative sequence converging to $0$. By \autoref{representaion-extra}, for any $k \in \mathds N^*$, there is some continuous mapping $f^k$ satisfying \hyperref[representation00]{Condition \ref{representation00}}. Applying the Hartman--Stampacchia gives $x^k$ such that 
\begin{equation*}
	\langle p - x^k, f^k(x^k)-x^k \rangle  \le 0 \;\; \forall p \in C. 
\end{equation*}
Substituting $p$ for $f^k(x^k)$ into the above inequality implies $f^k(x^k) = x^k$. By  \autoref{representaion-extra} again, there exist at most $N+1$ vectors $z^{1,k}\dots, z^{N+1,k}$ in $\zeta \big( B(x^k,\varepsilon_k)\big)$ and strictly positive numbers $\beta_{1,k}, \dots, \beta_{N+1,k}$ such that 
\begin{equation}   \label{representaion f^k 3}
	f^k(x^k) = \sum_{i=1}^{N+1} \beta_{i,k}z^{i,k}, 
\end{equation}
with $\sum_{i=1}^{N+1}\beta_{i,k}=1$. 
We now use the compactness argument to prove the existence of a fixed point of $\zeta$. Indeed, note that there exists $u^{i,k}\in \olsi  B$ such that $z^{i,k} \in \zeta(x^k+ \varepsilon_k u^{i,k})$ for any $i=1, \dots, N+1$ and $k \in \mathds N^*$. Since  $\Big(\big(x^k, (\beta_{i,k})_{i=1}^{N+1}, (u^{i,k})_{i=1}^{N+1}\big)\Big)_{k}$ is a sequence in a compact set $C \times [0,1]^{N+1}\times \olsi  B^{N+1}$, without loss of generality, we might assume that the sequence converges to $\big(x, (\beta_i)_{i=1}^{N+1}, (u^i)_{i=1}^{N+1}\big)$. For any $i=1,\dots, N+1$, since $\lim\limits_{k \to \infty} \big(x^k + \varepsilon_k u^{i,k}\big)= x$, by the compactness of $\zeta(x)$ and the upper semi-continuity of $\zeta$, we conclude that there are some $z^i\in C$ and there exists a strictly increasing subsequence $(k_n)_n\subset (k)$ such that $\lim\limits_{n \to \infty} z^i_{k_n} = z^i$ and $z^i \in \zeta(x)$.  It is obvious that $\sum_{i=1}^{N+1}\beta_i=1$. The convexity of  $\zeta(x)$ implies $\sum_{i=1}^{N+1}\beta_i z^i \in \zeta(x)$. As proved above, $x^k$ is the fixed point of $f^k$ and $\lim\limits_{n \to \infty } x^{k_n} = x$, implying that 
%. It follows from the LHS of identity \eqref{representaion f^k 3} that 
\begin{equation*}
	\lim\limits_{n \to \infty}f^{k_n}(x^{k_n}) = x.
\end{equation*}
On the other hand, the convergences of $\{\beta_{i,k_n}\}$ and $\{z^i_{k_n}\}$ imply that
\begin{equation*} 
	\lim\limits_{ n \to \infty} \sum_{i=1}^{N+1} \beta_{i,k_n} z^i_{k_n} 
	= \sum_{i=1}^{N+1} \beta_i z^i. 
	%\in \zeta(x)
\end{equation*}
Combining the above convergences with Identity \eqref{representaion f^k 3} proves $x= \sum\limits_{i=1}^{N+1} \beta_i z^i$. 
Since the set $\zeta(x)$  is convex and $z^i \in \zeta(x)$ for $i=1, \dots, N+1$, it follows that $\sum\limits_{i=1}^{N+1} \beta_i z^i \in \zeta(x)$. As a result, $x \in \zeta(x)$. This concludes the existence of a fixed point of $\zeta$.
%	Then we repeat the procedure of the proof of %\nameref{proof using Brouwer} 	\autoref{Kakutani-thm} 	on page \pageref{proof using Brouwer} and conclude that  there exists a fixed point of $\zeta$. 
%\end{proof} 
\qed 
\begin{remark} As in \cite{Cellina1969a}, the so-called fixed point of $\zeta$ is constructed by a cluster point of a set of fixed points of \q{approximate mappings} to $\zeta$. The cluster point is shown as a fixed point by distance between it and the graph of the correspondence $\zeta$  being as small as one wants. As in our approach, we exploit the convexity of the correspondence.
\end{remark}
\begin{remark} We could obtain the proof for the Kakutani fixed-point theorem by using the Brouwer's fixed-point argument and the same procedure as above. 
\end{remark} 

\section{Conclusion}

In this paper, we extend the Hartman-Stampacchia theorem to convex correspondences; we use both the original theorem and its generalized version as primary tools to prove not only the GND lemma but also its generalized version, without relying on a fixed-point argument. Furthermore, we establish the cycle of equivalences among the GND lemma, the Hartman--Stampacchia theorem, its generalized form, and the Kakutani and  Brouwer fixed-point theorems. As presented in the proof of \autoref{HS-thm-generalized} and later applied to the generalized GND (\autoref{Florenzano1982-thm}), we hope the proof 
potentially provides a novel approach to studying the existence of equilibrium, particularly 
an alternative numerical approach to computing equilibrium prices, compared to the approaches by Scarf \cite{Scarf82} and  Scarf and Hansen \cite{ScartHansen73},  and the approach proposed by 
Le et al. \cite{Le2022Sperner}. 

This paper has focused on finite-dimensional spaces. An interesting direction for future research would be to extend our  analysis to infinite-dimensional settings \cite{CornetETB2020,CornetGouYannelis2023,KhanMcLeanUyanik_ET2025,Otsuka_ETB2024,YannelisJMAA1985}.  Another promising avenue is to investigate whether the continuity assumption in the Hartman--Stampacchia theorem can be relaxed, as well as to explore the potential applications of such a relaxation in establishing the existence of equilibria under discontinuities \citep{CornetETB2020, CornetGouYannelis2023, HeYannelis_ET2016, HeYannelis_JMAA2017, KhanMcLeanUyanik_ET2025, PodczeckYannelis_ET2022, PodczeckYannelis_ET2024}.
%Reviewer #2: The paper seems like a solid piece of work. However, no credit is given to earlier work on the GND Lemma. For example the weak Walras law was used in Yannelis JMAA 85, and Krasa-Yannelis ET 1994, together with the upped demi continuity of the excess demand correspondence ( weaker than use). Also there are related papers that relax the continuity of the excess demand, e.g., He-Yannelis JMAA and ET 2017/2018 , Cornet ETB 2020 and others...The recent paper of Khan et all in ET 2025  is not mentioned. I am glad to accept a revised version of this paper but credit should be given to earlier contributions on the subject. The references should be updated and perhaps add open questions. It is not clear how this work could be extended to infinite dimensional spaces and also allow for discontinuous exceed demands.

\begin{acknowledgements} We thank Jean-Marc Bonnisseau, Antonio D’Agata and anonymous referees for their helpful comments and discussions.
%Acknowledgements to sponsoring agencies and individuals should be placed here.
\end{acknowledgements}
 
\appendix  %This command ends the counting of sections.

\renewcommand{\theequation}{\arabic{equation}}  % reset to default numbering style
\setcounter{equation}{0}  % Reset equation numbering to 1

\section{Appendix} \label{proofs-sec}
\subsection{Proof of \autoref{representation-lemma}}
\label{prove-representation-lemma}
%	\begin{proof}[Proof of \autoref{representation-lemma}]
	We construct the mapping $f$ in $2$ steps. First, we use a partition of unity subordinated to a covering of $C$. Second, we verify that the resulting mapping satisfies  \hyperref[representation00]{Condition \ref{representation00}}. 
	
	\noindent 
	By the compactness of $C$, there exists a finite covering of $C$, say $\big(B(x^i,r)\big)_{i=1}^M$ where $B(x^i,r)$ are open balls of radius $r$ centered at point $x^i\in C$, for $i=1,\dots,M$. Let $\big(\alpha_i\big)_{i=1}^M$ be a partition of unity subordinated to the covering $\big(B(x^i,r)\big)_{i=1}^M$  (See Section $2.19$ in Aliprantis and Border \cite{AliprantisBorder2006} on page $66$ for a detailed explanation partition of utility). Take $y^i \in \zeta(x^i)$ for all $i=1, \dots ,M$. We define \footnote{
		A very similar idea, using the convex combination with coefficients being a partition to a covering in \autoref{representation-lemma}, can be found in \cite{Cellina1969a}. However, it is used for different purposes. While  Theorem $1$ in \cite{Cellina1969a} is used to build a continuous mapping whose graph is separated with the correspondence with a small distance, in our paper it is used to extend the Hartman--Stamppachia's theorem to correspondence.
	} the mapping $f: C \to \mathds{R}^N$ by
	\begin{equation*}
		f(x) = \sum_{i=1}^M \alpha_i(x)y^i, \quad 
		\forall x\in C. 
	\end{equation*} 
	It is clear that $f$ is continuous on $C$ since it is a finite sum of continuous mappings. 
	
	\noindent Next, we prove that the mapping $f$ satisfies  \hyperref[representation00]{Condition \ref{representation00}}. Indeed, fix some $x \in C$. Let $J = \{i \in\mathds{N}: 1 \le i \le M \text{ and } x \in B(x^i,r)  \}$. Observe that $ x \in \cap_{i\in J} B(x^i, r)$, so $ x^i \in B( x, r)$ and $y^i \in \zeta (B( x, r))$ for all $i \in J$.  If $i \notin J$, then $x \notin B(x^i,r)$, and hence the partition of unity ensures that $\alpha_i(x) = 0$. Thus, we have
%	Note that if $i \notin J$, it follows that 	$x \notin B(x^i,r)$ and thus that the partition of unity over C subordinated to the covering $\big(B(x^j,r)\big)_{j=1}^M$ implies $\alpha_i(x)=0$. Consequently, 
	\begin{equation*}
		f(x) =\sum _{i=1} ^{M} \alpha_i (x) y ^i = \sum _{i \in J} \alpha_i (x) y ^i + \sum _{i \notin J} \alpha_i (x) y ^i =\sum _{i \in J} \alpha_i (x) y ^i. 
	\end{equation*}
	Since $\sum_{i \in J} \alpha_i(x) =1$, we conclude that  $f(x) \in co\Big(\zeta  \big(B( x, r)\big)\Big)$, where $co(S)$ denotes the convex hull of the set $S$. By the Carath\'eodory convexity theorem  \cite{Caratheodory1907}  which states that in an $n-$dimensional vector space, every vector in the convex hull of a nonempty set can be written as a convex combination of at most $(n+1)$ vectors from the set \footnote{For a simple proof, see Proposition $1.1.2$ in  \cite{FlorenzanoLeVan2001}  or Theorem $5.32$ in \cite{AliprantisBorder2006}.}, there exist at most $N+1$ vectors $z^1,\dots,z^{N+1} \in \zeta\big(B( x, r) \big)$ and strictly positive numbers $\beta_1,\dots,\beta_{N+1}$ such that 
	$$ f (x)  =\sum _{i=1}^{N+1} \beta _i z^i \quad \text{ with } \quad \sum _{i=1}^{N+1} \beta_i=1.$$ 
	This completes the proof. 
	%	\end{proof} 
%\qedsymbol 	
\subsection{Proof of \autoref{HS-thm-generalized}} \label{proof-HS-thm-generalized} 
Let $(\varepsilon_k)_k$ be a non-negative sequence converging to $0$. For any $k \ge 1$,  apply \autoref{representation-lemma} 
with $r=\varepsilon_k$ to obtain a continuous mapping $f^k: C \to \mathds{R}^N$ that satisfies  \hyperref[representation00]{Condition \ref{representation00}}. Then, by applying  
\hyperref[HS-thm]{Hartman--Stamppachia} theorem to the mapping $f^k$ on $C$, there exists  some $x^k\in C$ such that the following inequality holds: 
\begin{equation} \label{HS ineq 2} 
	\langle p, f^k(x^k)\rangle \le \langle x^k,f^k(x^k)\rangle  \quad \forall p \in C.
\end{equation}
From \autoref{representation-lemma}, we know that there exist\footnote{Upper indices mark vectors and lower indices real numbers.} at most $(N+1)$ vectors $z^{1,k},\dots,z^{N+1,k} \in \zeta\big(B(x^k, \varepsilon_k)\big)$ and positive numbers $\beta_{1,k}\dots, \beta_{N+1,k}$ such that
\begin{equation} \label{representation f^k 1} 
	f^k (x^k)  =\sum _{i=1}^{N+1} \beta_{i,k} z ^{i,k}  
\end{equation}
with $\sum _{i=1}^{N+1} \beta_{i,k}=1$. For each $i=1, \dots, N+1$ and $k \in \mathds N^*$, since $z^{i,k} \in \zeta\big(B(x^k, \varepsilon_k)\big)$, there exists some $u^{i,k}$ in closed unit ball $\olsi  B$ such that $z^{i,k} \in \zeta\big(x^k+\varepsilon_k u^{i,k}\big)$. Observe that $\Big( \big(x^k, (\beta_{i,k})_{i=1}^{N+1}, (u^{i,k})_{i=1}^{N+1} \big)\Big)_{k}$ is a sequence in the compact set $C \times [0,1]^{N+1} \times \olsi  B^{N+1}$. So,  without loss of  generality, we can assume that this sequence converges to some limit $\big(x, (\beta_i)_{i=1}^{N+1}, (u^i)_{i=1}^{N+1} \big)$. Since $\lim\limits_{ k \to \infty}\varepsilon_k = 0$, it follows that $\lim\limits_{k \to \infty} x^k + \varepsilon_k u^{i,k} = x$. In addition, since $\zeta(x)$ is compact and $ z^{i,k}  \in \zeta(x^k+\varepsilon_k u^{i,k})$, the upper semi-continuity of $\zeta$ implies
%\footnote{See more about the semi-continuity property in \autoref{semi-continuity criterion} in Appendix on page \pageref{semi-continuity criterion}.} 
that there is a subsequence $\big(z^{i,{k_n}}\big)_{k}$ being convergent to $z^i \in \zeta(x)$ for all $i=1, \dots,N+1$. 
It is obvious from Identity \eqref{representation f^k 1} that
\begin{equation} \label{convergence}
	\lim\limits_{n \to \infty} f^{k_n} (x^{k_n}) = \sum _{i=1}^{N+1}  \beta_i z^i:=z. 
\end{equation}
Since $z^i \in \zeta(x)$ and $\sum_{i=1}^{N+1} \beta_i=1$, the convexity of $\zeta(x)$ ensures $z \in \zeta(x)$. 
%and $z \in \zeta(x)$ since $\sum_{i=1}^{N+1} \beta_i=1$ and  $\zsec:eta(x)$ is convex.   
Finally, combining Inequality \eqref{HS ineq 2}  with  Identity \eqref{representation f^k 1}  and Convergence \eqref{convergence}, we obtain 
\begin{equation*}
	\langle p, z \rangle  \leq  \langle x,z \rangle  \quad \forall p \in C. 
\end{equation*}   
This completes the proof of \autoref{HS-thm-generalized}. 
%\end{case}  
%\end{proof} 
%\qedsymbol 
\subsection{Proof of  \autoref{HS-thm-generalized-lwr}} \label{proof-HS-thm-generalized-lwr}
%	\begin{proof}
	%\begin{case}{\bf{$\boldsymbol{\zeta}$} is upper semi-continuous}
	Indeed, since $\zeta$ is lower semi-continuous, due to  \cite{Michael1956} (Theorem $3.1^{\prime\prime\prime}$), we obtain\footnote{This is a particular case of Theorem $3.1^{\prime\prime\prime}$ in \cite{Michael1956}. For detailed proof, see, e.g., Proposition 10 in \cite{Florenzano1981} or Proposition 1.5.3 in \cite{Florenzano2003} on page 31.
		%or \autoref{selection-thm} in Appendices on \autopageref{selection-thm}
	}  a continuous selection mapping $f$ of $\zeta$. 
	%there exists a continuous selection $f$ of $\zeta$. 
	Applying \autoref{HS-thm} to mapping $f$ with $K$ replaced by $C$, we obtain  
	$x \in C$ such that
	\begin{equation}
		\langle p, f(x) \rangle \leq \langle x, f(x)\rangle  \;\; \forall p \in C. 
	\end{equation}
	%$$ \langle f(\olsi  x), v-\olsi  x \rangle  \leq 0, \;\; \forall v \in C.$$
	Since $f$ is a selection mapping of $\zeta$, it follows that $f(x) \in \zeta(x)$. 
	Define $z= f(x)$ to end the proof. 
	%. Since $f$ is a continuous selection mapping of $\zeta$, it follows that $f(x) \in \zeta(x)$. 
	%$\zeta (\olsi  x)$ to end the proof.
	%\end{case} 
	%	\end{proof}  
%\qedhere \qed 
%\qedsymbol
\subsection{Proof of \autoref{retract lemma}}  
\label{prove-retract-lm}
%	\begin{proof}[Proof of \autoref{retract lemma}] 
	The proof follows the idea of \cite{FlorenzanoLeVan1986} (see Corollary $3$ in \cite{FlorenzanoLeVan1986}). 
	
	\noindent Since $P \varsubsetneq span(P) $, then $P$ is not a subspace of $\mathds{R}^N$, consequently, there exists $a \in -P\backslash P$. We show in %Claim
	\hyperref[non-empty polar cone and -cone]{Claim}
	%\autoref{non-empty polar cone and -cone}  
	below that it is possible to choose  such $a$ satisfying $a \in P^0$  (see \autoref{exist-vector-a}). 
	\begin{claim} \label{non-empty polar cone and -cone} There is some $a \in P^\circ  \cap (-P) \cap P^{\mathsf{c}}$ and $a \ne 0_N$. 
	\end{claim}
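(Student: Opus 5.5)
We construct the vector $a$ as $a=-b$, where $b\in P$ is a nonzero element of $P$ that makes a non-negative inner product with every element of $P$ and does not lie in $-P$. Then $a\in -P$, $a\ne 0_N$, $\langle a,p\rangle=-\langle b,p\rangle\le 0$ for all $p\in P$ (so $a\in P^\circ$), and $a\notin P$. Everything therefore reduces to finding such a $b$. Our tool is a splitting of $P$ into its lineality space and a pointed part.

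\emph{Step 1 (lineality decomposition).} Let $L=P\cap(-P)$. It is a linear subspace: it is a convex cone, closed under negation, and contains $0_N$ because $P$ is closed. Set $P'=P\cap L^\perp$. Take $p\in P$ and let $l$ be the orthogonal projection of $p$ onto $L$. Since $-l\in L\subset P$ and a convex cone is closed under addition, $p-l\in P\cap L^\perp=P'$. Hence $P=L+P'$. The cone $P'$ is closed, convex and pointed, since $P'\cap(-P')\subset L\cap L^\perp=\{0_N\}$. The hypothesis $P\varsubsetneq span(P)$ says that $P$ is not a subspace, so $P\ne L$ and therefore $P'\ne\{0_N\}$.

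\emph{Step 2 (a strictly positive functional on $P'$).} We want $e\in\mathds R^N$ with $\langle e,p\rangle>0$ for all $p\in P'\setminus\{0_N\}$. Consider the compact set $S\cap P'$. Its convex hull is compact and does not contain $0_N$. Indeed, if a convex combination of unit vectors of $P'$ were zero, then one of them, $u$, would satisfy $-u\in P'$ (the rest of the combination, rescaled, lies in $P'$), contradicting pointedness. Strict separation of $0_N$ from this compact convex hull yields $e$. Normalising so that $e\in L^\perp$ is harmless, although it is not even needed.

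\emph{Step 3 (the nearest-point trick).} Let $K=\{p\in P':\langle e,p\rangle=1\}$. It is closed and convex. It is nonempty because $P'\ne\{0_N\}$. It is bounded because $\langle e,\cdot\rangle\ge c>0$ on $S\cap P'$. Let $b$ be the point of $K$ nearest to $0_N$. The variational characterisation of the projection (the same normal-cone inequality used around \autoref{HS-thm}) gives $\langle b,k-b\rangle\ge 0$ for all $k\in K$. Hence $\langle b,k\rangle\ge\|b\|^2>0$ on $K$, and by positive homogeneity $\langle b,p\rangle\ge 0$ for all $p\in P'$. Since $b\in L^\perp$, the decomposition $P=L+P'$ extends this to $\langle b,p\rangle\ge 0$ for all $p\in P$.

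\emph{Step 4 (conclusion).} We have $b\in P$ and $b\ne 0_N$. Suppose $-b\in P$. Then $-b\in P\cap L^\perp=P'$, and together with $b\in P'$ this contradicts the pointedness of $P'$. Hence $b\notin -P$, and $a=-b$ has all the required properties.

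The main obstacle is Step 2, the existence of a strictly positive functional on the pointed cone $P'$. The argument given via the compactness of $S\cap P'$ and strict separation should handle it cleanly. Step 3 is routine once Step 2 is in place.
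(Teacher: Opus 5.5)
Your proof is correct, but it takes a much longer route than the paper. The paper uses a single projection. It picks $x\in P$ with $x\notin -P$, lets $y=\pi_{-P}(x)$ be the projection of $x$ onto the closed convex cone $-P$, and puts $a=y-x$. Then:
\begin{itemize}
\item $a\neq 0_N$ because $x\notin -P$;
\item $a\in -P$ because $y$ and $-x$ both lie in the convex cone $-P$;
\item $a\in P^\circ$: testing the projection inequality $\langle y-x,\,y-\bar y\rangle\le 0$ with $\bar y=-nz$ ($z\in P$) and letting $n\to\infty$ gives $\langle a,z\rangle\le 0$;
\item $a\notin P$ since $\langle a,a\rangle>0$.
\end{itemize}
In your language, the paper's $b=-a=x-\pi_{-P}(x)$ is just the projection residual. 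It automatically has nonnegative inner product with every element of $P$, since the residual of a projection onto a closed convex cone lies in the polar of that cone. It belongs to $P$ because it is the sum of $x$ and $-\pi_{-P}(x)$. So the paper needs no lineality decomposition, no separation theorem and no compactness of a convex hull.

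What your construction buys in exchange is a stronger vector than the claim requires. Your $a$ is orthogonal to $L=P\cap(-P)$ and satisfies $\langle a,p\rangle<0$ for every $p\in P\setminus L$, so it lies in the relative interior of $P^\circ$. The paper's $a$ need not: for $P=\mathds R^2_+$ and $x=(1,0)$ it gives $a=(-1,0)$, which is on the boundary of $P^\circ$.

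If you keep your approach, two simplifications are available:
\begin{itemize}
\item Steps 2 and 3 merge. The point of $\mathrm{co}(S\cap P')$ nearest to $0_N$ already belongs to $P'\subset L^\perp$ and is strictly positive on $P'\setminus\{0_N\}$, so it can be taken as $b$ directly.
\item Step 4 follows at once from $\langle b,-b\rangle=-\|b\|^2<0$, without invoking pointedness.
\end{itemize}
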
 
	\begin{figure}[ht]
		\centering 
		%		\begin{center}
			\begin{tikzpicture}[domain=-100:100,scale = 0.03]
				%[scale = 0.1] 
				\newcommand{\angA}{20}
				%		\newcommand{\a}{30}
				%		\newcommand{\angA}{\tan}
				
				%axis 
				
				\coordinate (O) at (0,0); 
				\coordinate (X1) at (100,0); 
				\coordinate (X2) at (-100,0);
				\coordinate (Y1) at (0, 100);
				\coordinate (Y2) at (0, -100);
				\draw (X1)--(X2); 
				\draw (Y1)--(Y2); 
				
				\draw[black,domain=-100:100] plot(\x, {tan(\angA)*\x});   
				\draw[black, domain=-35:35] plot(\x, {(tan(90-\angA))*\x}); 
				%		\draw (O) circle (78.9491cm); 
				
				\draw[blue, latex-latex] (20:78.9491) arc(20:70:78.9491) node[midway,right, scale =.7]{$P$};   % a= 50, the radius = 78.9491 
				
				\draw[blue, latex-latex] (160:99.7390) arc(160:290:99.7390) node[pos =.4, left,scale=.7]{$P^\circ$}; % b_parameter= 60 the radius = 94.7390
				
				\draw[blue, latex-latex] (200:78.9491) arc(200:250:78.9491) node[pos =0.5,below, scale =.7]{ $-P$};    % b_parameter = 50; radius = 78.9491 
				
				\draw[->, red] (0,0) -- (-20,-25) node[red, below, scale=.7]{$a$};

				\draw[black, domain =-100:0] plot(\x, {(tan(180-\angA))*\x} ); 
				
				\draw[black, domain=0:35] plot(\x, {tan(270+\angA)*\x});
				%		\draw[black,domain=-30:40] plot(\x, {(tan(90-\angA))*\x}); 
				%		\draw[domain=0:30] plot(\x, {(tan(270+\angA))*\x}); 
				%		\draw[domain=-80:0] plot(\x, {(tan(180-\angA))*\x}); 
				
			\end{tikzpicture}
			\caption[short text]{Illustrate the vector $a$.}
			\label{exist-vector-a}
			
		\end{figure}
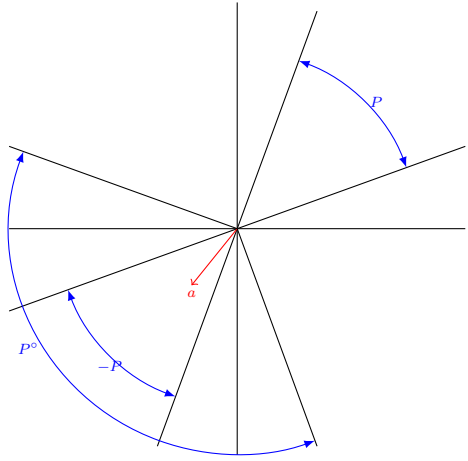
		{\it Proof of \hyperref[non-empty polar cone and -cone]{Claim}}
%		\begin{proof}[Proof of			\autoref{non-empty polar cone and -cone}] 
			Since $P$ is not a subspace, there is some $ x \in P$, but $x \notin -P$. Define $y$ to be the orthogonal projection of $x$ onto $-P$. Let $a = y + (-x)$. Since $x \notin -P$ and $-P$ is closed, it follows that $a \ne 0_N$. On one hand, because $-P$ is a convex cone, $y$ and $-x$ belong to $-P$, hence $a$ belongs $-P$. On the other hand, by the choice of $y$, 
			\[ \langle y- x, y - \olsi  y \rangle \le 0  \text{ for all } \olsi  y \in -P. \]
			Note that $\olsi  y = -nz \in -P$ with $n>0$ and $z \in P$, and that $a=y-x$. Substituting them into the above inequality, we get   
			$$\langle a, y + nz\rangle \le 0.$$
			This leads to
			\begin{equation*}
				\langle a,z \rangle \le -\frac{1}{n} \langle a, y\rangle \;\; \text{ for all } n>0\;\text{ and } z  \in P.
			\end{equation*}
			Letting $n$ go to infinity proves that$\langle a, z \rangle \le 0$ for all $z \in P$. In other words, $a \in P^\circ $. Furthermore, since $a \in P^\circ $ and $\langle a,a\rangle > 0$, we can deduce that $a \notin P$ or $a \in P^{\mathsf{c}}$. The proof of 
			\hyperref[non-empty polar cone and -cone]{Claim}
			%Claim 
%			\autoref{non-empty polar cone and -cone} is over. %\phantom\qedhere
%		\end{proof} 
\qed
		\noindent Now we construct a retract mapping $r$. 
		According to %Claim  
		\hyperref[non-empty polar cone and -cone]{Claim},  there exists  $a \in P^\circ  \cap (-P) \cap P^{\mathsf{c}}$ and $a \ne 0_N$. Fix $x\in \olsi  B \cap P$. Consider the following equation with some real variable $\lambda_a(x)$ (see \autoref{exist-lambda}):
		\begin{figure}[ht] 
			\centering 
			%		\begin{center}
				\begin{tikzpicture}[domain=-5:100,scale = 0.05]
					%[scale = 0.1] 
					\newcommand{\angA}{20}
					%		\newcommand{\a}{30}
					%		\newcommand{\angA}{\tan}
					
					%axis 
					
					\coordinate (O) at (0,0); 
					\coordinate (X1) at (100,0); 
					\coordinate (X2) at (-50,0);
					\coordinate (Y1) at (0, 100);
					\coordinate (Y2) at (0, -50);
					\draw (X1)--(X2); 
					\draw (Y1)--(Y2); 
					
					\draw[black,domain=0:100] plot(\x, {(tan(\angA))*\x});  
					
					\draw[black, domain=0:35] plot(\x, {(tan(90-\angA))*\x}); 
					%\draw (O) circle (78.9491cm); %b_parameter = 50
					
					\draw[latex-latex] (20:102.6339) arc(20:70:102.6339)  node[midway,right]{P};    
					%\draw (O) circle (102.6339cm);
					% radius = 102.6339 = a/cos(1-\angA), a_paratemer= 65 
					
					\draw (20:78.9491) arc(20:70:78.9491); 
					% b_parameter = 50 
					
					%			\draw[latex-latex] (160:94.7390) arc(160:290:94.7390) node[midway,left]{$P^\circ$}; 
					
					%			\draw[latex-latex] (200:78.9491) arc(200:250:78.9491) node[midway,left]{\footnotesize -P};    
					\newcommand{\xa}{-5}
					\newcommand{\ya}{-40}
					\newcommand{\xx}{30}
					\newcommand{\yx}{25}
					\draw[->] (0,0) -- (\xa,\ya) node[left, scale =.7, pos =0.5]{$ a$}; 
					\draw[->] (0,0) -- (\xx,\yx) node[pos=0.6,below, scale=0.7]{$x$}; 
					\draw[blue, dashed] (\xa,\ya) -- (\xx,\yx); 			
					%			\coordinate (C) at (61.4161,49.6087);
					\coordinate (C) at (49.6087,61.4161);			
					\draw[red, ->] (\xx,\yx)--(C) node[pos =0.2,right, red, scale=.7]{ $\lambda_a(x)(x-a)$};  			
					\draw[blue, ->] (O)--(C) node[pos =0.7,left, scale=0.7]{$ r(x)$}; 		
					%			\draw[black, domain =-100:0] plot(\x, {(tan(180-\angA))*\x} ); 
					
					%			\draw[black, domain=0:35] plot(\x, {tan(270+\angA)*\x});
					%		\draw[black,domain=-30:40] plot(\x, {(tan(90-\angA))*\x}); 
					%		\draw[domain=0:30] plot(\x, {(tan(270+\angA))*\x}); 
					%		\draw[domain=-80:0] plot(\x, {(tan(180-\angA))*\x}); 
					
				\end{tikzpicture}
				\caption[short text]{Illustrate the variable $\lambda_a(x)$.}
				\label{exist-lambda} 
			\end{figure}
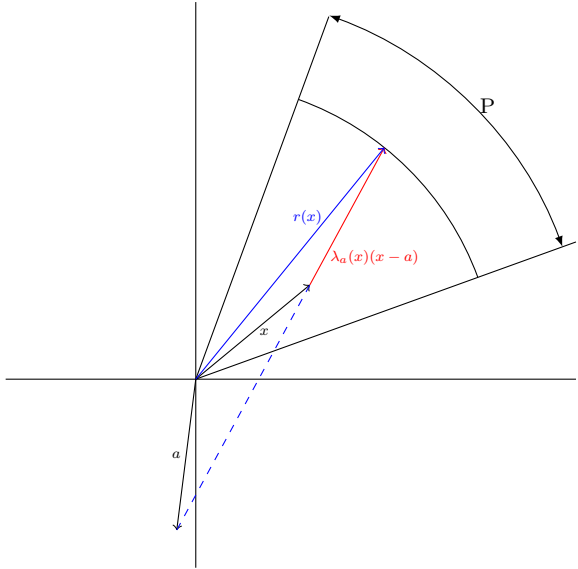  
			\begin{equation}
				||x + \lambda_a(x) (x-a) || =1.
			\end{equation}
			This leads the quadratic equation: 
			\begin{equation}\label{retract-eq-lambda}
				||x-a||^2 \lambda^2_a(x) + 2\langle x, x-a \rangle \lambda_a(x) + ||x||^2-1 = 0.
			\end{equation} 
			Since $||x-a||^2 \left( ||x||^2-1 \right) \le 0$ and $a\not=x$, this quadratic equation has at least one non-negative solution. We are able to compute an explicit formula for this solution as follows: 
			\begin{equation} \label{retract-lambda}
				\lambda_a(x)=  \frac{-\langle x, x-a \rangle + \sqrt{\langle x, x-a\rangle^2 + (1-||x||^2)||x-a||^2}} {||x-a||^2}.
			\end{equation} 
			Let us define the mapping $r$ by\footnote{By the construction, the retract $r$ is dependent on the vector $a$. 
				%This is later used for alternative proof of \autoref{Florenzano1982-thm}, see Section  \ref{Apendix-proof} in Appendix on page \pageref{Apendix-proof} for more details.
			}
			\begin{equation} \label{retract} 
				r(x) = x+ \lambda_a(x)(x-a). 
			\end{equation}
			On one hand, by the construction, $||r(x)||=1$. On the other hand, $r(x)$ can be alternately described as $r(x) = \big(1+ \lambda_a(x)\big)x + \lambda_a(x)(-a)$. Because $x$ and $-a$ are in the convex cone $P$ and $\lambda_a(x)\ge 0$, it follows that $r(x)$ belongs $P$. Therefore, we have constructed the well-defined mapping $r$ from $\olsi  B \cap P$  to $S \cap P$. Since $\lambda_a(x)$ is continuous with respect to $x$ on $\olsi  B \cap P$, then so is the mapping $r$. 
			To end the proof, it remains to show that $\displaystyle r_{\mid S\cap P} = id_{S \cap P}$. Indeed, consider $x \in S \cap P$, then $||x||=1$. Since $a \in P^\circ$ and $x \in P$ then $\langle x,a\rangle \le 0$ implying $\langle x, x-a \rangle \ge 0$. From \eqref{retract-lambda} we get  $\lambda_a(x)=0$. Consequently,  \eqref{retract} leads to $r(x)=x$. 
			%	\end{proof}  
%		\qedsymbol	
		\subsection{The Proof of \autoref{GNDstrong}: when \texorpdfstring{$\zeta$}{\textzeta} is Single-valued}\label{single-value-Thm2}
		We directly apply the
		\hyperref[HS-thm]{Hartman-Stampacchia theorem}
		%\nameref{HS-thm} 
		to this case.\footnote{For the sake of providing intuition, we provide the proof for this case. Actually, the proof for this case could be viewed as to be included in the case of correspondence.}  
		%{\it Proof}
%		\begin{proof}
			We need to seek $\olsi  p \in \Delta $ such that $\zeta(\olsi  p) \in \mathds R^N_-$. Indeed, applying  the 
		\hyperref[HS-thm]{Hartman-Stampacchia theorem}
%		\nameref{HS-thm} 
		to the mapping $\zeta$ on $\Delta$, we obtain some $\olsi  p \in \Delta$ such that
			\begin{equation*}
				\langle p, \zeta(\olsi  p)\rangle \leq  
				\langle \olsi  p,\zeta(\olsi  p) \rangle \;\; \forall p \in \Delta.
			\end{equation*} Since the assumption about $\zeta$ in \autoref{GNDstrong} (or Condition  \eqref{gnd-condition}) implies $ \langle \olsi  p, \zeta(\olsi  p) \rangle \leq 0$, we see that 
			\begin{equation*}
				\langle  p, \zeta (\olsi  p) \rangle \leq 0 \;\; \forall p \in \Delta.
			\end{equation*} 
			It is obvious that this implies $\zeta(\olsi  p) \in  \mathds R^N_-. $ 
%		\end{proof}    
\qed 
		\subsection{The Proof of \autoref{Florenzano1982-thm}: when  \texorpdfstring{$\zeta$}{\textzeta} is Single-valued}  \label{Florenzano(82)-mapping-proof1}
%		{\it Proof}
%		\begin{proof}   
			In this case, we need to seek $\olsi  p \in \olsi  B \cap P$ such that $\zeta (\olsi  p) \in P^\circ$. The proof splits into $2$ separate cases:
			\setcounter{case}{0}
			\begin{case} {\boldmath \bf{$ P \varsubsetneq span(P)$}} 
%			\newline 
				According to the assumption on $\zeta$, the mapping $\zeta$ is continuous. By \autoref{retract lemma} 
                %on page \pageref{retract lemma}
                , there is some retract $r: \olsi  B \cap P \to S \cap P $. Since $\zeta$ and $r$ are continuous on $\olsi  B \cap P$, it follows that so is the mapping $\zeta \circ r$. We apply \autoref{HS-thm} to the mapping $\zeta \circ r$ on $\olsi  B \cap P$, and thus obtain some $\olsi  x \in \olsi  B \cap P$  such that
				\begin{equation} \label{HS ineq 1}
					\langle p, \zeta \circ r (\olsi  x) \rangle  \leq \langle  \olsi  x, \zeta \circ r (\olsi  x)\rangle 
					\;\; \forall p \in \olsi  B \cap P. 	
				\end{equation}
				We now deploy \autoref{spprtng-lm} with $\zeta$ replaced by $\zeta \circ r$, 
				$x$ by $\olsi  x $, $z$ by $\zeta (r(\olsi  x))$ to prove that $\olsi  p = r(\olsi  x)$  satisfies \autoref{Florenzano1982-thm}. It remains to verify Conditions \eqref{eq-in-sphere} and \eqref{HS-ineq} of \autoref{spprtng-lm}. On one hand, for Condition \eqref{eq-in-sphere}, let $p \in S\cap P$. Since $r$ is a retract mapping, it follows $r(p) = p$,  and consequently, $\zeta \circ r (p) = \zeta (p)$. Combining this with condition \eqref{Florenzano1982-condition2} (more precisely condition \eqref{Florenzano1982-condition-mapping}), we obtain 
				$$\langle p, \zeta \circ r (p) \rangle = \langle p, \zeta (p) \rangle  \leq  0,$$ implying that condition $\eqref{eq-in-sphere}$ holds. On the other hand, by Inquality \eqref{HS ineq 1},  Condition \eqref{HS-ineq} holds for $x=\olsi  x$ and $z = \zeta(r(\olsi  x ))$. The proof for this case is over.   
				%By the assumption about $\zeta$ of \autoref{Florenzano1982-thm} (or condition \eqref{Florenzano1982-condition2}), condition \eqref{eq-in-sphere} holds with $\zeta$ replaced by $\zeta \circ r$. By inequality \eqref{ineq1}, inequality 	\eqref{HS-ineq} holds with $x$ replaced by $\olsi  x$, $z$ by $\zeta \circ r (\olsi  x)$. Apply \autoref{spprtng-lm} with $\zeta$ replace by $\zeta \circ r$, $x$ by $\olsi  x$. We thus\footnote{Since $\zeta$ is single-valued, it follows that $P^\circ \cap \zeta \circ r (\olsi  x) \ne \emptyset$ is equivalent to $\zeta \circ r (\olsi  x) \in P^\circ$.} obtain $\zeta \circ r \olsi  (x) \in P^\circ$. 

				%\setcounter{case}{0} 
				%\begin{case} \boldmath {$H \ne span(P)$.  }
				%\end{case}  

				%we obtain $\zeta \big( r( \olsi  x) \big) \in P^0$ with $\olsi  r(\olsi  x) \in S \cap P$. 		
			\end{case}  
			
			\begin{case}{\boldmath \bf{ $P  = span(P)$}} 
%				\newline  
				On one hand, according to \autoref{HS-thm} with $C$ replaced by $\olsi  B \cap P$, we obtain $\olsi  p \in \olsi  B \cap P$ such that 
				\begin{equation*}
					\langle p, \zeta(\olsi  p) \rangle \le \langle \olsi  p, \zeta(\olsi  p) \rangle \;\; \forall p \in \olsi  B \cap B. 
				\end{equation*}
				On the other hand, by the assumption of \autoref{Florenzano1982-thm}, 
				\begin{equation*} 
					\forall p \in S \cap P, \;\; \langle p, \zeta(p) \rangle \le 0. 
				\end{equation*} \hyperref[spprtng-lm2]{Lemma}
				\autoref{spprtng-lm2} implies that $\zeta(\olsi  p) \in P^\circ$. This concludes \autoref{Florenzano1982-thm}. 
			\end{case} 
%		\end{proof} 
%		\qed 
			\subsection{An Alternative Proof of \autoref{Florenzano1982-thm}: when \texorpdfstring{$\zeta$}{\textzeta} is Single-valued and  \bf{$P  = span(P)$} } 
			\label{Florenzano(82)-mapping-proof} 
%			\begin{proof}
				Apply \autoref{HS-thm} to the mapping $\zeta$ on $\olsi  B \cap P$, and obtain some $\olsi  p \in \olsi  B \cap P$ such that
				\begin{equation} \label{ineq1-1}
					\langle p, \zeta(\olsi  p) \rangle  \leq  \langle \olsi  p, \zeta(\olsi  p) \rangle \;\;  \text{ for all } p \in \olsi  B \cap P. 
				\end{equation}
				We want to conclude that $\zeta(\olsi  p) \in P^\circ$. We split the argument into three cases: 
				%	Let $m$ be the dimension of $P$. 
				\begin{itemize}
					\item 
					If $\olsi  p \in int(\olsi  B \cap P)$, since $\zeta(\olsi  p) \in N_{\olsi  B \cap P}(\olsi  p)$, where $N_{\olsi  B \cap P}(\olsi  p)$ is the normal cone to $\olsi  B \cap P$ at $\olsi  p$, we conclude $\zeta(\olsi  p) =0_N$. Consequently,  $\zeta (\olsi  p) \in P^\circ$. Note that this circumstance happens only if the cone  $P$ is $\mathds R^N$.    
					\item If $\olsi  p \in \text{ri}(\olsi  B \cap P)$, i.e., $\olsi  p$ belongs the relative interior\footnote{\label{relative-inteior-et-boundary}For notion of relative interior and relative boundary, see, for example, Section $1.2.2$ in  \cite{FlorenzanoLeVan2001} on page $11$. } of $\olsi  B \cap P$ then $\zeta(\olsi  p) \in N_{\olsi  B \cap P}(\olsi  p)$. We know that since $P$ is the subspace, it follows $N_{\olsi  B \cap P}(\olsi  p) = N_{P}(\olsi  p) = P^\perp = P^\circ$. Hence $\zeta(\olsi  p) \in P^\circ$. 
					%is an interior of $\olsi  B \cap P$ in the space $P$ and from \eqref{ineq1} consequently  $\zeta(\olsi  p)$ is an element in the normal cone to $\olsi  B \cap P$ at $\olsi  p$ in the space $P$, therefore $\zeta (\olsi  p) = 0_N$ and $\zeta(\olsi  p)$ belongs to the polar cone of $P$. 
					\item If $\olsi  p \notin \text{ri}( \olsi  B \cap P)$, then $\olsi  p \in \text{Bd}^\text{r}(\olsi  B \cap P)$, i.e., the relative boundary
					\footnote{See Footnote \ref{relative-inteior-et-boundary}.}
					%\footref{relative-inteior-et-boundary} 
					of $\olsi  B \cap P$.  Since $P$ is the subspace, $\text{Bd}^\text{r}(\olsi  B \cap P) = S \cap P$. Consequently, $\olsi  p \in S \cap P$. By the assumptions about $\zeta$, we deduce that $\langle \olsi  p, \zeta(\olsi  p) \rangle \le 0$. Combining this with Inequality \eqref{ineq1-1}, we obtain, 
					\begin{equation} \label{ineq2-2}
						\langle p, \zeta(\olsi  p) \rangle  \leq 0 \;\;  \text{ for any } p \in \olsi  B \cap P. 
					\end{equation}
					Since $P$ is a cone , it follows that Inequality \eqref{ineq2-2} can be extended to any $p \in P$. As a result,  $\zeta(\olsi  p) \in P^0$. 
				\end{itemize}
				In conclusion, there is some $\olsi  p \in \olsi  B \cap P$ such that $\zeta(\olsi  p) \in P^0$. The proof for the mapping is over.  
		\subsection{Proof of \autoref{spprtng-lm}} 
		\label{prove-supporting-lm}
		%	\begin{proof}[Proof of \autoref{spprtng-lm}] %\nameref{spprtng-lm}
			First we claim that $
			\langle x,z \rangle \leq 0.$ Indeed, the proof of the claim splits into two cases:  
			\setcounter{case}{0}
			\begin{case}{\boldmath{$||x|| = 0$ or $||x||=1$}}. 
				\begin{itemize}
					\item If $||x||=0$, then it is obvious that $\langle x,z \rangle  =0$. 
					\item  
					If $||x||=1$, in this case $x \in S\cap P$. By Condition \eqref{eq-in-sphere}, we consequently obtain  $\langle x,z\rangle  \le 0$.
				\end{itemize} 
				
			\end{case}
			\begin{case}{\bf $ \boldsymbol{0 < || x|| < 1}$}. \\
				Take $p= \frac{x}{||x||}$. Since $P$ is a cone, $x \in P$, and $||p||=1$, it follows $p \in \olsi  B \cap P$. Inequality \eqref{HS-ineq} implies that 
				\begin{equation*}
					\left(1 - 
					\frac{1}{||x||}\right) \langle x, z \rangle   \ge 0. 
				\end{equation*}
				Note that $1 -\frac{1}{||x||}<0$, hence $\langle x, z \rangle  \le 0$. We have finished proving  the claim.  	
			\end{case}
			\noindent We now turn to  show that $z \in P^\circ \cap\zeta(x)$. Since $\langle x,z \rangle \leq 0$, it follows from Inequality \eqref{HS-ineq} that
			\begin{equation} \label{polar-cone-ineq}
				\langle p, z \rangle \leq 0 \;\; \forall p \in \olsi  B \cap P.  
			\end{equation}
			Since $P$ is a cone and $\{x \in P:||x|| \le 1 \} \subset \olsi  B \cap P$, we could extend Inequality \eqref{polar-cone-ineq} to all $p \in P$.\footnote{Indeed, if $p=0$, then $\langle p, z \rangle \leq 0$. If $p\not=0$, define $p'\equiv p/||p||$, then, $p'\in \olsi  B \cap P$. Then, $\langle p', z \rangle \leq 0$ implies $\langle p, z \rangle \leq 0$.} In other words, $z \in P^\circ$. According to Condition \eqref{HS-ineq}, $z \in \zeta(x)$. Therefore, we obtain $z \in P^\circ \cap \zeta(x) $, and this concludes the proof of \autoref{spprtng-lm}. 
\bibliographystyle{spmpsci}
\bibliography{reference3}
%\bibliographystyle{plain}
%\bibliographystyle{spbasic}
%\bibliographystyle{spbasic}

%\bibliographystyle{elsarticle-harv}
%\printbibliography

\end{document}